\documentclass[leqno,11pt,twoside]{article} 
\usepackage{amsmath, amsthm, amssymb}

\usepackage{geometry}
\geometry{a4paper}

\usepackage{fancyhdr} 
\pagestyle{fancy} 
\fancyhead{}
\fancyhead[LE,RO]{\thepage}
\fancyhead[CO]{\nouppercase{\scshape \runtitle}}
\fancyhead[CE]{\nouppercase{\scshape \runauthor}}
\fancyfoot{}

\usepackage{titlesec}

\titleformat{\section}
  {\normalfont\scshape}{\thesection .}{1em}{}
\titleformat{\subsection}
  {\normalfont\scshape}{\thesubsection .}{1em}{}

\title{An infinitesimal approach to a conjecture of Eisenbud and Harris}
\date{}
\author{\scshape J\"urgen Rathmann}

\makeatletter
\let\runauthor\@author
\let\runtitle\@title
\makeatother

\usepackage{mathrsfs}  
\usepackage[cmtip]{xypic}
\usepackage{enumitem}
\setlist{nolistsep}

\newtheorem{theorem}{Theorem}[section]
\newtheorem{proposition}[theorem]{Proposition}

\newtheorem{corollary}[theorem]{Corollary}
\newtheorem{conjecture}[theorem]{Conjecture}

\newtheorem{question}[theorem]{Question}

\theoremstyle{remark}
\newtheorem{remark}[theorem]{Remark}
\newtheorem{example}[theorem]{Example}

\newcommand\F{\mathscr{F}}
\newcommand\I{\mathscr{I}}
\renewcommand\O{\mathscr{O}}
\renewcommand\P{\mathbf{P}}
\renewcommand\lto{\longrightarrow}

\DeclareMathOperator{\Ker}{Ker}
\DeclareMathOperator{\Img}{Im}
\DeclareMathOperator{\Coker}{Coker}

\DeclareMathOperator{\Hom}{Hom}

\DeclareMathOperator{\rk}{rk}
\DeclareMathOperator{\Cliff}{Cliff}

\begin{document}
\bibliographystyle{plain}
\maketitle

\section{Introduction}

\overfullrule=0pt

Eisenbud and Harris conjectured that an algebraic curve of high genus 
lies on a surface of low degree (which they proved for curves
of very large degree). 
They observed constraints on the Hilbert function of a general hyperplane section 
$\Gamma$, which imply that $\Gamma$ lies on a curve $C$ of low degree. 
We investigate this situation under deformation. Given a set of 
sufficiently many points $\Gamma\subset C$ with $C$ linearly normal,
we show that for every deformation of $\Gamma\subset\P^r$ with constant 
$h_\Gamma(2)$ the curve $C$ deforms along with $\Gamma$.

\bigskip
Let $X$ be a reduced irreducible nondegenerate curve of degree $n$ and genus $g$ in $\P^{r+1}$.
Castelnuovo showed in the late 19th century that the genus is bounded by a quadratic
polynomial $\pi_0(n,r)$ in $n$ with leading term $\frac{n^2}{2r}$. He also showed that curves
of maximal genus must lie on a unique surface $S$ of minimal degree $r$, at least as long as
$n\ge 2r+3$.

Castelnuovo's approach was extended by Eisenbud and Harris in 1982 who defined functions
$\pi_\alpha(n,r)=\frac{n^2}{2(r+\alpha)}+O(n)$
($1\le \alpha\le r$) and $n_0(r)$ and showed that any curve $X$ with degree
$n\ge n_0$ and genus $g>\pi_\alpha(n,r)$ must lie on a surface of degree
$\le r-1+\alpha$. {\cite[3.22]{Harris}}.
Here $n_0(r)= 2^{r+2}$ for $r\ge 7$ (slightly worse for $r<7$).

They further conjectured that the restriction on $n$ could be lowered:

\begin{conjecture}[Eisenbud-Harris]
Let $X\subset\P^{r+1}$ be a nondegenerate reduced irreducible curve of genus $g$ and
degree $n\ge 2r+3+2\alpha$. If $g>\pi_\alpha(n,r)$\textup{,} then $X$ lies on a surface of degree
at most $r-1+\alpha$.
\end{conjecture}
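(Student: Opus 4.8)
The plan is to deduce the conjecture from the infinitesimal statement announced in the introduction, applied uniformly to the family of hyperplane sections of $X$. Put $d:=r-1+\alpha$, so that the hypothesis reads $n\ge 2d+5$ (recall $1\le\alpha\le r$). Over a dense open $U$ of the dual space $(\P^{r+1})^{\vee}$ the intersection $\Gamma_H:=X\cap H$ is a reduced set of $n$ points spanning $H\cong\P^r$, in uniform position by the uniform position principle, and $h_{\Gamma_H}(2)$ takes a fixed generic value $h_2$. The hypothesis $g>\pi_\alpha(n,r)$ enters only through the Castelnuovo--Eisenbud--Harris analysis of the Hilbert function of a general hyperplane section: it forces $h_2-h_{\Gamma_H}(1)=h_2-(r+1)\le d$, and this in turn --- by the Hilbert-function arguments in the proof of \cite[3.22]{Harris}, which require no lower bound on $n$ --- forces $\Gamma_H$ to lie on an irreducible nondegenerate curve $C_H\subset H$ with $\deg C_H\le d$, which is moreover the unique curve of degree $\le d$ through $\Gamma_H$ because $n\ge 2d+5$ and the points are in uniform position.

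Now the deformation theorem does the work, once its hypotheses are checked at each $H\in U$: that $\Gamma_H$ consists of sufficiently many points of $C_H$, for which the bound $n\ge 2d+5=2\deg C_H+5$ is exactly what is needed, and that $C_H$ is linearly normal, which I would extract from the extremality of $h_{\Gamma_H}(2)$ --- a nontrivial projection carrying this many points in uniform position would contradict the bound on $h_2$ --- falling back on Castelnuovo's direct argument for any residual low cases. Granting this, every deformation of $\Gamma_H$ with $h_{\Gamma_H}(2)$ constant lifts to a deformation of the pair $\Gamma_H\subseteq C_H$; equivalently, the forgetful map $\varphi$ from the scheme of pairs $\Gamma\subseteq C$ (with $h_\Gamma(2)=h_2$ and $\deg C\le d$) to such configurations $\Gamma$ is smooth. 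Pulling $\varphi$ back along $U$ gives a smooth morphism whose fibres are singletons, by the uniqueness above; a smooth bijective morphism is an isomorphism, so the curves $C_H$ form an algebraic family over $U$. This is the essential gain: elementarily one knows only that each $\Gamma_H$ lies on some curve of degree $\le d$, whereas now the $C_H$ vary algebraically and hence sweep out a variety.

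Finally I would assemble the surface. Restrict to a general pencil $\{H_t\}_{t\in\P^1}$ with axis $\Lambda\cong\P^{r-1}$, and let $S\subset\P^{r+1}$ be the closure of $\bigcup_t C_{H_t}$. The $C_{H_t}$ span pairwise distinct hyperplanes, hence are distinct, so $\dim S=2$; and since $\bigcup_t H_t=\P^{r+1}$ while $\Gamma_{H_t}=X\cap H_t\subseteq C_{H_t}\subseteq S$, we get $X\subseteq S$. For the degree, a general point of $S$ lies on a unique $H_t$, so the incidence variety $\{(t,x):x\in C_{H_t}\}$ maps birationally onto $S$; and for a general axis $\Lambda$ the scheme $S\cap\Lambda$ is finite, so the only points of $S\cap H_t$ not lying on $C_{H_t}$ are in $\Lambda$, whence $S\cap H_t=C_{H_t}$ for general $t$ and $\deg S=\deg C_{H_t}\le d=r-1+\alpha$, as required.

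Apart from the deformation theorem itself --- the heart of the matter, which I am assuming --- I expect the main obstacle to lie in the two hypothesis checks of the middle step: proving that $C_H$ is linearly normal, and extracting the uniqueness of $C_H$ (needed so that the smooth map $\varphi$ becomes, after restriction, an isomorphism rather than merely étale) from the sharp bound $n\ge 2r+3+2\alpha$ rather than the far larger $n\ge 2^{r+2}$ of Eisenbud and Harris. It is exactly here that the numerology $n\ge 2d+5$ is consumed.
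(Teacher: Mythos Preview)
The statement you are attempting to prove is presented in the paper as an \emph{open conjecture}; the paper does not prove it. The paper's contribution is the infinitesimal results (Theorems~\ref{thm13} and~\ref{thm14}) supporting the related Conjecture~\ref{con11}. So there is no ``paper's own proof'' to compare against, and your proposal must be judged on its own merits.

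Your argument has a fundamental gap in the very first step. You assert that the Hilbert-function arguments of \cite[3.22]{Harris} ``require no lower bound on $n$'' and already force $\Gamma_H$ to lie on a curve $C_H$ of degree $\le d$. This is false: those arguments require $n\ge n_0(r)=2^{r+2}$ (for $r\ge 7$). Obtaining the curve $C_H$ under the sharper bound $n\ge 2r+3+2\alpha$ is exactly the content of Conjecture~\ref{con11}, which is open for $\alpha\ge 4$. You are therefore assuming the hard part of the problem. The paper's deformation theorem cannot manufacture $C_H$; it only says that \emph{if} $\Gamma_H$ already lies on such a curve, then so do nearby configurations. In particular, Theorem~\ref{thm13} asserts only that $F(n,\alpha)$ is an isomorphism onto an open subset of \emph{one component} of $P(n,\alpha)$; nothing prevents $\Gamma_H$ from lying on a different component.

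Even granting the existence of $C_H$, your hypothesis checks do not go through. The paper's theorems apply only to \emph{smooth} curves of genus exactly $\alpha$ and degree exactly $r+\alpha$, embedded by a \emph{complete} linear system. You have no control over the smoothness or genus of $C_H$, and Section~\ref{ssec41} of the paper explicitly exhibits situations where $C_H$ fails to be linearly normal and where the quadrics through $\Gamma_H$ cut out a surface rather than a curve. Your proposed extraction of linear normality from ``extremality of $h_{\Gamma_H}(2)$'' is not an argument; and ``falling back on Castelnuovo's direct argument for any residual low cases'' is not available, since Castelnuovo's argument handles only $\alpha=0$.
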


They proved the conjecture for $\alpha=1$ \cite[3.15]{Harris}. The next case $\alpha=2$ ($r\ge 7$)
was established by Petrakiev in 2008 \cite{Petra}.

The proofs proceed by a careful analysis of the Hilbert function $h_\Gamma(l)$ of a general hyperplane 
section $\Gamma=X\cap H$. Here $h_\Gamma(l)$ is the rank of the $l$-th graded piece of the
homogeneous coordinate ring of $\Gamma$. 

Castelnuovo obtains his bound from showing that $h_\Gamma(l)\ge \min(n,lr+1)$.  
On the other hand, if $n\ge 2r+3$ and $h_\Gamma(2)=2r+1$, then 
the quadrics containing $\Gamma$ intersect in a unique rational normal curve of 
degree $r$ in $H$. The surface $S$ of minimal degree is obtained as the intersection of 
all quadrics which contain $X$.

Eisenbud and Harris' proof for $\alpha=1$ uses that $h_\Gamma(2)=2r+2$
for curves with $n\ge 2r+5$ and $g>\pi_1(n,r)$. They show that the quadrics through $\Gamma$
intersect in an elliptic normal curve.

These approaches suggest the following supporting conjecture which (except for part (ii)) 
has been noted and studied by Reid \cite{Reid}, and later by Petrakiev \cite{Petra}:

\begin{conjecture}\label{con11}
Let $\Gamma \subset \P^r$ be a general hyperplane
section of a nondegenerate reduced irreducible curve of degree $n$.  
Assume 
that\ \ $\Gamma$ imposes $2r+1+\alpha$ conditions on quadrics for some 
$0 \le \alpha \le r-2$.  If one of the following holds
\begin{itemize} 
\item[\textup{(i)}] $\alpha\le r-3$ and $n \ge 2r+3+2\alpha$\textup{,} 
\item[\textup{(ii)}] $\alpha=r-2$ and $n \ge 4r+1=2r+5+2\alpha$\textup{,}
\end{itemize}
then $\Gamma$ lies on a reduced irreducible curve $C$ of degree at most $r+\alpha$. $C$ is
a component of the intersection of the quadrics containing $\Gamma$.
\end{conjecture}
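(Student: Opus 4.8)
The plan is to study the base locus $B$ of the system $V=H^0(\I_\Gamma(2))$ of quadrics through $\Gamma$ and to show that the component of $B$ through a general point of $\Gamma$ is a reduced irreducible curve $C$ of degree at most $r+\alpha$ containing $\Gamma$. The numerics are suggestive: $\dim V=\binom{r+2}{2}-(2r+1+\alpha)$ equals $h^0(\I_{C_0}(2))$ for a projectively normal curve $C_0\subset\P^r$ of degree $r+\alpha$ and arithmetic genus $\alpha$, so the conjectural $C$ must be cut out by \emph{all} of $V$ and there is no numerical slack. Since $\Gamma=X\cap H$ is a general hyperplane section, the uniform position lemma gives that $\Gamma$ is in linear general position and that $h_{\Gamma'}=h_\Gamma$ for every general $\Gamma'\subseteq\Gamma$. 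The cases $\alpha=0$ (Castelnuovo: $C$ a rational normal curve) and $\alpha=1$ (Eisenbud--Harris: $C$ an elliptic normal curve) are known and anchor an induction.

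For the inductive step I would project from a general point $p\in\Gamma$, obtaining $\Gamma'=\pi_p(\Gamma\setminus\{p\})\subset\P^{r-1}$, the projection of $\Gamma$ from $p$, again $n-1$ points in uniform position. Quadric cones through $\Gamma$ with vertex $p$ correspond to quadrics through $\Gamma'$; using uniform position to exclude degenerate behaviour of the projection one finds $h_{\Gamma'}(2)=2(r-1)+1+\alpha$, so the deficiency $\alpha$ is unchanged while $r$ drops by one, and the hypothesis on $n$ is inherited --- except precisely at the boundary $\alpha=r-3$, where one is thrown into case~(ii) in $\P^{r-1}$ with one point too few, which is presumably the source of the separate, stronger hypothesis $n\ge 4r+1$ there. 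By induction $\Gamma'$ lies on a reduced irreducible curve $C'$, a component of the base locus of the quadrics through $\Gamma'$, of degree $\le(r-1)+\alpha$; its cone $\widehat{C'}$ with vertex $p$ is then a surface component of $B$ through $\Gamma$. Letting $p$ range over several general points of $\Gamma$ yields several such cones through $\Gamma$, and $C$ should emerge as the one-dimensional part of $B$ along a general point of $\Gamma$ --- irreducible and reduced by a uniform-position argument on $\Gamma$, and of controlled degree by comparing with the cones or by a liaison computation inside them.

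The main obstacle is the degree bound $\deg C\le r+\alpha$. Intersecting $\widehat{C'}$ with one further quadric of $V$ gives only a curve through $\Gamma$ of degree up to $2\bigl((r-1)+\alpha\bigr)$, far too large, so a genuinely sharper input is required to collapse this to the conjectural value; one must also first exclude that $B$ has a component of dimension $\ge 2$ meeting $\Gamma$ generically. The higher-dimensional case I would rule out using that $\Gamma$ is a hyperplane section of an \emph{irreducible} curve $X$ together with Castelnuovo's bound $h_\Gamma(l)\ge\min(n,lr+1)$ in degrees $l=2,3$. For the degree estimate I expect the paper's deformation result to be decisive: it transports the sharp statement from the well-understood extremal curves (rational and elliptic normal curves, and their higher-genus analogues) along deformations of $\Gamma$ keeping $h_\Gamma(2)$ constant, forcing $\deg C\le r+\alpha$ for every $\Gamma$ reachable from such a model. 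Making that transport unconditional --- connecting an arbitrary hyperplane section $\Gamma$ to a model through a family with $h_\Gamma(2)$ fixed --- is the step I expect to be hardest, and plausibly why the conjecture is still open for $\alpha\ge 3$.
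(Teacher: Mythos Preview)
The statement you are trying to prove is labelled a \emph{Conjecture} in the paper, and the paper does not prove it. The paper explicitly records that only the cases $\alpha=0,1$ (Castelnuovo; Eisenbud--Harris) and $\alpha=2,3$ for large $r$ (Petrakiev) are known, and then turns to an \emph{infinitesimal} version: Theorems~\ref{thm13} and~\ref{thm14} show that if $\Gamma$ already lies on such a curve $C$, then every deformation of $\Gamma$ with constant $h_\Gamma(2)$ is induced by a deformation of $C$. That is evidence for the conjecture, not a proof of it, so there is no ``paper's own proof'' to compare against.

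Your sketch is a plausible outline of how one might attack the conjecture, but it has a genuine gap exactly where you expect it. The projection step, even granting that $h_{\Gamma'}(2)=2(r-1)+1+\alpha$ (which itself needs an argument), only produces cones $\widehat{C'}$ over the inductive curves; extracting from their intersection a reduced irreducible curve of degree at most $r+\alpha$ is the whole difficulty, and nothing in your outline closes it. Your hope that ``the paper's deformation result'' will do this is a misreading of what that result says: Theorem~\ref{thm13} shows that $F(n,\alpha)$ is an isomorphism onto a dense open subset of \emph{one} component of $P(n,\alpha)$. It does \emph{not} show that $P(n,\alpha)$ is irreducible, nor that an arbitrary $\Gamma$ with $h_\Gamma(2)=2r+1+\alpha$ lies in that component. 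So you cannot ``transport'' the conclusion from the model curves to an arbitrary hyperplane section unless you first connect them through a family inside $P(n,\alpha)$ --- and establishing that connectedness is essentially equivalent to the conjecture itself. In short, the deformation theorem tells you the conjecture is stable under small perturbations of a known good $\Gamma$, but it gives no mechanism to reach a given $\Gamma$ from a good one.
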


For fixed $r$, the numerical restrictions on $n$ and $\alpha$ cannot be 
improved \cite{Reid},\cite{EGH}.

In addition to the cases $\alpha=0, 1$ mentioned above, this conjecture is known 
for $\alpha=2$, $r\ge 5$ and $\alpha=3$, $r\ge 7$ \cite[prop.\ 4.3 and 4.4]{Petra}.
For arbitrary $\alpha\le r-3$, it is known, if $\Gamma$ lies on a
two-dimensional rational normal scroll \cite{Fano}, \cite{Reid}.

\bigskip
In this note, we try to collect further evidence by investigating
the following infinitesimal version of (\ref{con11}):

\begin{question}
Suppose $\Gamma$ is a set of $n$ points in $\P^r$ satisfying both the 
assumptions and the conclusion of \textup{(\ref{con11})}.  Is every deformation of\ \ $\Gamma$
induced by a deformation of $C$\textup{?}
\end{question}

Our setup deals with the simplest case, namely that of linearly normal smooth
curves. 

To describe our results, let $0 \le \alpha \le r-2$ and consider the open subset 
of the Hilbert scheme of curves of genus $g=\alpha$ and degree $d=g+r$ corresponding 
to smooth irreducible curves embedded by a complete linear system.  Using the 
universal family, we can construct a variety $H(n,\alpha)$ parametrizing subschemes 
of such curves consisting of pairwise distinct points.  $H(n,\alpha)$ is smooth 
and irreducible.

Further, let $P(n)$ be the open subset of the Hilbert scheme
of length $n$-subschemes of $\P^r$ corresponding to subschemes $\Gamma$
consisting of pairwise distinct points, and consider the canonical
restriction map 
\begin{equation*}
\xymatrix{H^0\O_{\P^r}(2) \ar[r]^{M} & H^0\O_{\Gamma}(2)}
\end{equation*}
on $P(n)$.
\begin{equation*}
P(n,\alpha)=\{(x_1,\ldots,x_n) \in P(n): \rk(M)\le 2r+\alpha+1\}
\end{equation*}is a 
determinantal variety.

The embedding of $C$ in $\P^r$ yields a map $F(n,\alpha)\colon H(n,\alpha) \lto P(n,\alpha)$.

\begin{theorem}\label{thm13}
If\ \ $0 \le \alpha \le r-3$ and $n \ge 2r+3+2\alpha$
or $\alpha=r-2$ and $n \ge 4r+1$\textup{,} then $F(n,\alpha)$ maps $H(n,\alpha)$ isomorphically
on a dense open subset of a component of $P(n,\alpha)$.
\end{theorem}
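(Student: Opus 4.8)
The plan is to prove that $F:=F(n,\alpha)$ is injective and that its differential is everywhere an isomorphism onto the tangent space of $P(n,\alpha)$; an injective morphism which is étale onto $P(n,\alpha)$ is an open immersion, and its image, being irreducible of dimension $\dim H(n,\alpha)$, is then a dense open subset of a single component of $P(n,\alpha)$. Fix a point $(C,\Gamma)\in H(n,\alpha)$ with $\Gamma=\{x_1,\dots,x_n\}$, and set $H=\O_C(1)$, so $\deg H=r+\alpha$ and $g(C)=\alpha$. Because $\alpha\le r-2$ we have $\deg C=r+\alpha\ge 2g+2$, so $C$ is projectively normal with homogeneous ideal generated by quadrics, and $h^1\O_C(1)=0$; the latter and the Euler sequence give $h^1(T_{\P^r}|_C)=0$ and then $h^1(N_{C/\P^r})=0$, so $H(n,\alpha)$ is smooth at $(C,\Gamma)$ of dimension $m+n$ with $m:=h^0(N_{C/\P^r})$. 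Under either numerical hypothesis $\deg\O_C(2H-\Gamma)=2(r+\alpha)-n<0$, so $H^0\O_C(2)\to H^0\O_\Gamma(2)$ is injective; combined with quadratic normality this forces $M$ to have rank exactly $\rho:=h^0\O_C(2)=2r+\alpha+1$ at $\Gamma$, makes $\ker M=H^0(\I_C(2))$ equal to the space of quadrics through $C$, and puts $F$ into the rank-$\rho$ stratum of $P(n,\alpha)$.

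First I would check that $F$ is injective: if $(C',\Gamma)$ is another point of $H(n,\alpha)$, then the quadrics through $C'$ also equal $\ker M$, i.e.\ the quadrics through $C$, and as these cut out $C$ scheme-theoretically (again $\deg C\ge 2g+2$) we get $C'\subseteq C$, hence $C'=C$. For the differential, apply the snake lemma to the two exact sequences $0\to\bigoplus_i T_{x_i}C\to T_{(C,\Gamma)}H(n,\alpha)\to H^0(N_{C/\P^r})\to 0$ and $0\to\bigoplus_i T_{x_i}C\to\bigoplus_i T_{x_i}\P^r\to\bigoplus_i N_{C/\P^r,\,x_i}\to 0$, which $dF$ intertwines and on which it is the identity on $\bigoplus_i T_{x_i}C$: this identifies $\ker(dF)$ with $\ker\bigl(\mathrm{ev}_\Gamma\colon H^0(N_{C/\P^r})\to\bigoplus_i N_{C/\P^r,\,x_i}\bigr)=H^0(N_{C/\P^r}(-\Gamma))$. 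The latter vanishes, since a nonzero section would give a line subbundle of $N_{C/\P^r}$ of degree $\ge n$, whereas dualizing the surjection $H^0(\I_C(2))\otimes\O_C\twoheadrightarrow N^{*}_{C/\P^r}(2)$ (which exists because the ideal of $C$ is generated by quadrics) embeds $N_{C/\P^r}(-2)$ in a trivial bundle, so every line subbundle of $N_{C/\P^r}$ has degree at most $2(r+\alpha)<n$. Thus $dF$ is injective at every point.

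The substantial point is that $dF$ surjects onto $T_\Gamma P(n,\alpha)$. Because $M$ has maximal rank $\rho$ at $\Gamma$, that tangent space consists of the $\xi=(\xi_i)_i\in\bigoplus_i T_{x_i}\P^r$ killing the map $H^0(\I_C(2))\to\Coker M$, $Q\mapsto(dQ_{x_i}(\xi_i))_i\bmod\Img M$. For $Q\in H^0(\I_C(2))$ the number $dQ_{x_i}(\xi_i)$ depends only on the normal component $\bar\xi_i\in N_{C/\P^r,\,x_i}$ of $\xi_i$ (as $Q|_C\equiv 0$), so this condition factors through a map $\bar\Phi\colon\bigoplus_i N_{C/\P^r,\,x_i}\to\Hom\bigl(H^0(\I_C(2)),\Coker M\bigr)$, expressed via the conormal sections $\bar Q\in H^0(N^{*}_{C/\P^r}(2))$ of the quadrics through $C$; one checks $\bar\Phi\circ\mathrm{ev}_\Gamma=0$ (the resulting tuple is $\langle\bar Q,\nu\rangle|_\Gamma$ with $\langle\bar Q,\nu\rangle\in H^0\O_C(2)$), so $dF$ maps into $T_\Gamma P(n,\alpha)$, and surjectivity is the assertion $\Img(\mathrm{ev}_\Gamma)=\ker\bar\Phi$. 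Dualizing on $C$ by Serre duality — via $(\Coker M)^{*}\cong H^0\bigl(K_C(\Gamma-2H)\bigr)$, $H^0(N_{C/\P^r})^{*}\cong H^1\bigl(N^{*}_{C/\P^r}(K_C)\bigr)$, and the fact that the dual of $\mathrm{ev}_\Gamma$ is the residue coboundary of $0\to N^{*}_{C/\P^r}(K_C)\to N^{*}_{C/\P^r}(K_C+\Gamma)\to N^{*}_{C/\P^r}(K_C+\Gamma)|_\Gamma\to 0$ — this equality becomes the statement that the sections of $N^{*}_{C/\P^r}(K_C+\Gamma)$ in the image of the multiplication map $H^0(\I_C(2))\otimes H^0\bigl(K_C(\Gamma-2H)\bigr)\to H^0\bigl(N^{*}_{C/\P^r}(K_C+\Gamma)\bigr)$ realize all residues at $\Gamma$; in particular it is enough that this multiplication map be surjective, equivalently that $H^1\bigl(M_W(K_C+\Gamma-2H)\bigr)=0$, where $M_W:=\ker\bigl(H^0(\I_C(2))\otimes\O_C\to N^{*}_{C/\P^r}(2)\bigr)$ is the bundle of linear syzygies among the quadrics through $C$.

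This vanishing is the step I expect to be the real obstacle. The line bundle $K_C(\Gamma-2H)$ has degree $n-2r-2$, and the task is to bound the minimal slope among quotient bundles of $M_W$ — equivalently, to keep the syzygies of the quadrics through $C$ from being too inhomogeneous — sharply enough that $n\ge 2r+3+2\alpha$ (and $n\ge 4r+1$ when $\alpha=r-2$) already yields $H^1(M_W(K_C+\Gamma-2H))=0$; this is where the numerical hypotheses of the theorem, and of Conjecture~\ref{con11}, are consumed. Granting it, $dF$ is an isomorphism $T_{(C,\Gamma)}H(n,\alpha)\xrightarrow{\ \sim\ }T_\Gamma P(n,\alpha)$ at every point, so $P(n,\alpha)$ is smooth of dimension $m+n$ along $F(H(n,\alpha))$ and $F$ is étale; being injective as well, $F$ is an open immersion, and its image, an irreducible open subset of $P(n,\alpha)$ of dimension $m+n=\dim H(n,\alpha)$, is a dense open subset of one component.
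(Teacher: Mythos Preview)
Your reduction is correct and matches the paper's Proposition~\ref{prop21} almost exactly: injectivity of $F$ from $n>2\deg C$, injectivity of $dF$ from $H^0(N_{C/\P}(-\Gamma))=0$, and surjectivity of $dF$ onto $T_\Gamma P(n,\alpha)$ reduced to surjectivity of
\[
H^0\I_C(2)\otimes H^0B\lto H^0\bigl(N^\vee_{C/\P}(2)\otimes B\bigr),\qquad B=\omega_C\otimes\O_C(\Gamma-2),
\]
equivalently to $H^1(P_L\otimes B)=0$ with $P_L$ your $M_W$. But you then write ``Granting it,'' and that is precisely the content of the theorem; everything before it is formal. So as it stands the proposal has a genuine gap: the crucial vanishing is stated but not proved, and your suggested line of attack---bounding the minimal slope of quotients of $P_L$---is not developed and, as far as I can see, would not give the sharp bounds $n\ge 2r+3+2\alpha$ (resp.\ $n\ge 4r+1$) without substantial further input.

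The paper does \emph{not} analyze slopes of $P_L$. Instead it realizes $P_L$ as a quotient of a concrete pushforward from $C^3$,
\[
0\lto\wedge^2M_L\lto (pr_1)_*\bigl(pr_2^*L\otimes pr_3^*L\otimes\O(-\Delta_{1,2}-\Delta_{1,3}-\Delta_{2,3})\bigr)\lto P_L\lto 0,
\]
(Proposition~\ref{prop22}), checks that the higher direct images vanish, and thereby reduces $H^1(P_L\otimes B)=0$ to the vanishing of
\[
H^1\bigl(C^3,\ pr_1^*B\otimes pr_2^*L\otimes pr_3^*L\otimes\O(-\Delta_{1,2}-\Delta_{1,3}-\Delta_{2,3})\bigr).
\]
This is then handled (Theorem~\ref{thm32}) by choosing a general line bundle $L'_3$ of degree $g+3$, filtering $pr_3^*L$ by $pr_3^*L'_3$, and using that $M_{2,L'_3}$ has rank~$2$ so that $\wedge^2M_{2,L'_3}$ is an explicit line bundle; the numerical hypotheses $\deg B\ge 2g+1$ (i.e.\ $n\ge 2r+3+2\alpha$) for $\deg L\ge 2g+3$, and $\deg B\ge 2g+3$ (i.e.\ $n\ge 4r+1$) for $\deg L=2g+2$, enter exactly here. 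This triple-product filtration is the missing idea in your proposal.
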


\begin{theorem}\label{thm14}
If\ \ $0 \le \alpha \le r-2$ and $n \ge 2r+\alpha+5$\textup{,} 
and $C$ is a fixed curve\textup{,} then 
$F(n,\alpha)$ is locally an isomorphism in a general subscheme $\Gamma$\! of $C$.
\end{theorem}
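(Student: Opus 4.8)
The assertion is local and infinitesimal, so the plan is to compute the differential of $F=F(n,\alpha)$ at a general point $(C,\Gamma)\in H(n,\alpha)$ and show it is an isomorphism onto $T_\Gamma P(n,\alpha)$; smoothness of $P(n,\alpha)$ at $\Gamma$ will then come out as a consequence. Since $\alpha\le r-2$ the curve $C$ has degree $r+\alpha\ge 2\alpha+2=2g+2$, hence $C$ is projectively normal, $\I_C$ is generated by quadrics, and a chase of the Euler sequence (using $H^1\O_C(1)=0$) gives $H^1(N_{C/\P^r})=0$. Therefore $H(n,\alpha)$ is smooth at $(C,\Gamma)$ with $T_{(C,\Gamma)}H(n,\alpha)=H^0(N_{C/\P^r})\oplus H^0(N_{\Gamma/C})$, and a direct computation shows that $dF$ kills a tangent vector exactly when the associated deformation of $C$ vanishes along $\Gamma$; that is,
\begin{equation*}
\Ker(dF)=H^0(N_{C/\P^r}(-\Gamma)),\qquad \dim T_{(C,\Gamma)}H(n,\alpha)=h^0(N_{C/\P^r})+n .
\end{equation*}

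For $T_\Gamma P(n,\alpha)$: when $n\ge 2r+\alpha+1$ and $\Gamma$ is general on $C$ one has $H^0\I_\Gamma(2)=H^0\I_C(2)$ (projective normality of $C$ together with $H^0(\O_C(2)\otimes\O_C(-\Gamma))=0$), so $h_\Gamma(2)=2r+\alpha+1$ exactly and $\Gamma$ lies in the stratum where $\rk M$ equals $2r+\alpha+1$. There $T_\Gamma P(n,\alpha)$ is the space of first-order deformations of $\Gamma$ that do not raise $h_\Gamma(2)$; correcting the quadrics through $\Gamma$ to first order identifies it with $\Ker\psi$, where $\psi\colon H^0(N_{\Gamma/\P^r})\to\Hom(H^0\I_C(2),H^1\I_\Gamma(2))$ sends $v$ to $q\mapsto[(dq\cdot v)|_\Gamma]$. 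Because every $q\in H^0\I_C(2)$ vanishes on $C$, $\psi$ annihilates $H^0(N_{\Gamma/C})\subset H^0(N_{\Gamma/\P^r})$ and factors through a map $\bar\psi\colon N_{C/\P^r}|_\Gamma\to\Hom(H^0\I_C(2),H^1\I_\Gamma(2))$; hence $\dim\Ker\psi=n+\dim\Ker\bar\psi$, and everything reduces to the identity $\Ker\bar\psi=\Img\bigl(H^0(N_{C/\P^r})\to N_{C/\P^r}|_\Gamma\bigr)$.

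The device here is the bundle sequence on $C$ furnished by ``$\I_C$ generated by quadrics''. Let $L$ be the embedding line bundle ($\O_C(1)=L$, $\deg L=r+\alpha$) and put
\begin{equation*}
0\lto \mathscr{Q}\lto H^0\I_C(2)\otimes\O_C \lto N^\vee_{C/\P^r}\otimes L^2\lto 0 ;
\end{equation*}
dualising gives $0\to N_{C/\P^r}\to H^0\I_C(2)^\vee\otimes L^2\to \mathscr{Q}^\vee\otimes L^2\to 0$. One checks that, under this, $\bar\psi$ is the composite of the coboundary $\delta\colon H^0(N_{C/\P^r}|_\Gamma)\to H^1(N_{C/\P^r}(-\Gamma))$ of $0\to N_{C/\P^r}(-\Gamma)\to N_{C/\P^r}\to N_{C/\P^r}|_\Gamma\to 0$ with the map $H^1(N_{C/\P^r}(-\Gamma))\to H^0\I_C(2)^\vee\otimes H^1(L^2(-\Gamma))$ induced by the displayed sequence twisted by $-\Gamma$. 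As $H^1(N_{C/\P^r})=0$, $\delta$ is onto with kernel $\Img(\mathrm{res}_\Gamma)$; as $H^0(L^2(-\Gamma))=0$, the coboundary $H^0(\mathscr{Q}^\vee\otimes L^2(-\Gamma))\to H^1(N_{C/\P^r}(-\Gamma))$ is injective onto the kernel of the second map. A short diagram chase then yields a canonical isomorphism
\begin{equation*}
\Ker\bar\psi\,\big/\,\Img\bigl(H^0(N_{C/\P^r})\to N_{C/\P^r}|_\Gamma\bigr)\ \cong\ H^0\bigl(\mathscr{Q}^\vee\otimes L^2(-\Gamma)\bigr).
\end{equation*}

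Thus the theorem reduces to the vanishing of $H^0(N_{C/\P^r}(-\Gamma))$ and of $H^0(\mathscr{Q}^\vee\otimes L^2(-\Gamma))$ for a general length-$n$ subscheme $\Gamma\subset C$ with $n\ge 2r+\alpha+5$, and this is the crux. The role of the bound is purely numerical: computing $\deg N_{C/\P^r}=(r+1)(r+\alpha)+2\alpha-2$ and $\deg\mathscr{Q},\ \rk\mathscr{Q}$ from the two sequences, one finds $\chi(\mathscr{Q}^\vee\otimes L^2)\le n\,\rk\mathscr{Q}^\vee$ exactly when $n\ge 2r+\alpha+5$ --- sharp at $\alpha=r-2$, where $\deg C=2g+2$, and slack otherwise --- while $\chi(N_{C/\P^r}(-\Gamma))\le 0$ with room to spare. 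Granting that a general $\Gamma$ imposes independent conditions on these two globally generated bundles, so that their twists down acquire no unexpected sections, both $H^0$'s vanish; then $dF$ is injective and is an isomorphism $T_{(C,\Gamma)}H(n,\alpha)\xrightarrow{\sim}T_\Gamma P(n,\alpha)$ of spaces of equal dimension $h^0(N_{C/\P^r})+n$, so $F$ is unramified at $(C,\Gamma)$ with image of that dimension inside $P(n,\alpha)$; comparing with $\dim T_\Gamma P(n,\alpha)$ this forces $P(n,\alpha)$ to be smooth at $\Gamma$, and $F$ is then a local isomorphism there. The main obstacle is precisely this last interpolation statement --- the vanishing of $H^0(\mathscr{Q}^\vee\otimes L^2(-\Gamma))$ for general $\Gamma$ --- which I would attack by specialising $\Gamma$ to a well-chosen configuration, or by bounding the degrees of sub-line-bundles of the quadric-syzygy bundle $\mathscr{Q}$; everything preceding it is bookkeeping.
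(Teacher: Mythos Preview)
Your reduction is correct and, up to Serre duality, identical to the paper's: your bundle $\mathscr{Q}$ is the paper's $P_L$, and the vanishing $H^0(\mathscr{Q}^\vee\otimes L^2(-\Gamma))=0$ is exactly $H^1(P_L\otimes B)=0$ for $B=\omega_C\otimes\O_C(\Gamma-2)$, which is where Proposition~\ref{prop21} and the top row of diagram~(\ref{diag3}) land. So the framework is right, and you have honestly isolated the crux.

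The gap, however, is the entire content of the theorem. Your hope that ``a general $\Gamma$ imposes independent conditions'' on a globally generated bundle is not a general fact: already $E=\O\oplus\O(2)$ on $\P^1$ is globally generated with $\chi(E(-p-q))=0$, yet $h^0(E(-p-q))=1$ for every pair of points. So neither global generation nor the Euler-characteristic inequality settles it, and your two suggested attacks are both hard in their own right---degeneration of $\Gamma$ gives no obvious leverage on $P_L$, and bounding sub-line-bundles of $P_L$ amounts to a stability statement for the second-syzygy bundle, which is not known in this generality. The paper takes a completely different route: it resolves $P_L$ by pushing forward $pr_2^*L\otimes pr_3^*L\otimes\O(-\Delta_{1,2}-\Delta_{1,3}-\Delta_{2,3})$ from $C\times C\times C$ (Proposition~\ref{prop22}), so that $H^1(P_L\otimes B)$ becomes an $H^1$ on the triple product. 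One then filters by replacing the line bundle in one factor by a general auxiliary bundle of degree $g+3$ (this is where the genericity of $\Gamma$, hence of $B$, enters), and the associated graded pieces are handled by elementary $H^1$-vanishing on $C$ and $C\times C$ (Theorem~\ref{thm32}, specifically the statement labelled~(\ref{eq1})). Nothing like interpolation or semistability is needed. If you want to complete your argument along the paper's lines, dualise back to $H^1(P_L\otimes B)$ and look at Proposition~\ref{prop22} and the proof of Theorem~\ref{thm32}.
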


\begin{corollary}
If\ \ $0 \le \alpha \le r-2$ and $n \ge 2r+\alpha+5$\textup{,} then $P(n,\alpha)$ 
has an irreducible component whose general point lies in the image of $F(n,\alpha)$.
\end{corollary}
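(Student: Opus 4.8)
The plan is to read the corollary off Theorem~\ref{thm14} by a short openness argument. Fix a curve $C$ as in the definition of $H(n,\alpha)$ --- a smooth irreducible linearly normal curve of genus $\alpha$ and degree $r+\alpha$ in $\P^r$ --- and let $\Gamma\subset C$ be a general collection of $n$ distinct points, so that $p=(C,\Gamma)$ is the corresponding point of $H(n,\alpha)$. Since $0\le\alpha\le r-2$ and $n\ge 2r+\alpha+5$, Theorem~\ref{thm14} applies and tells us that $F=F(n,\alpha)$ is locally an isomorphism at $p$: there are Zariski-open neighborhoods $U\ni p$ in $H(n,\alpha)$ and $V\ni F(p)$ in $P(n,\alpha)$ with $F|_U\colon U\xrightarrow{\ \sim\ }V$. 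In particular $V$ is a nonempty open subset of $P(n,\alpha)$ contained in $\Img(F)$.

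Next I would single out a component. Let $Z$ be an irreducible component of the determinantal variety $P(n,\alpha)$ meeting $V$. Since an irreducible component is never contained in the union of the remaining components, the set $V'=V\setminus\bigcup_{Z'\neq Z}Z'$ is again open in $P(n,\alpha)$, is nonempty, and is a dense open subset of $Z$. As $V'\subseteq V\subseteq\Img(F)$, the general point of $Z$ --- being a point of the dense open subset $V'$ --- lies in the image of $F(n,\alpha)$, which is precisely the assertion of the corollary.

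I do not expect a genuine obstacle here: the entire content sits in Theorem~\ref{thm14}, and the step above is purely formal. The only point worth a remark is the reading of ``locally an isomorphism'': it is used in the strong sense that $F$ restricts to an isomorphism onto an \emph{open} subset $V$ of the target $P(n,\alpha)$ (equivalently, $dF_p$ is bijective, so $F(p)$ is a smooth point of $P(n,\alpha)$ lying on a unique component and $F$ is \'etale at $p$). Note that neither $C$ nor $p$ need be general in $H(n,\alpha)$ and $F(p)$ need not be a general point of $Z$; the passage from the single good point $F(p)$ to the generic point of a component is supplied entirely by the openness of $V$.
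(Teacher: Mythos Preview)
Your argument is correct and is exactly the intended deduction: the paper states the corollary immediately after Theorem~\ref{thm14} without a separate proof, so you are spelling out the implicit openness argument. Your remark that bijectivity of $dF_p$ forces $F(p)$ to be a smooth point of $P(n,\alpha)$ (hence to lie on a unique component, with $F$ \'etale there) is the right justification for reading ``locally an isomorphism'' in the strong sense.
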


The bounds in (\ref{thm13}) correspond to those predicted by (\ref{con11}). 
They are sharp, see (\ref{ex34}).

Our proofs proceed by analyzing the corresponding map on the 
Zariski tangent spaces.
Each statement is equivalent to the surjectivity of the canonical composite map
\begin{equation*}
H^0\I_C(2)\otimes H^0B\lto 
H^0(N^\vee_{C\vert\P}(2))\otimes H^0B\lto
H^0\big(N^\vee_{C\vert\P}(2)\otimes B\big).
\end{equation*}
for $B=\omega_C\otimes\O_C(\Gamma-2)$. 

Writing
\begin{equation*}
P_L=\Ker\big((H^0\I_C(2))\otimes\O_C\lto N^\vee_{C\vert\P}(2)\big),
\end{equation*}
this question reduces to showing that $H^1(P_L\otimes B)=0$.

As $P_L$ is a quotient of 
$(pr_1)_*\big(pr_2^*L\otimes pr_3^*L\otimes\O(-\Delta_{1,2}-\Delta_{1,3}-\Delta_{2,3})\big)$
($pr_i\colon C\times C\times C\lto C$ the canonical projections), we are
led to study conditions on line bundles $L_1$, $L_2$ and $L_3$ which guarantee the
vanishing of
\begin{equation*}
H^1\big(pr_1^*L_1\otimes pr_2^*L_2\otimes pr_3^*L_3\otimes\O(-\Delta_{1,2}-\Delta_{1,3}-\Delta_{2,3})\big).
\end{equation*}

The latter question can be resolved by a suitable filtration of a direct image, 
similarly to the approach in \cite{Laz}.

Interestingly, our proof does not require the points of $\Gamma$ to be in linearly general 
position. If the curve $C$ has degree $2g+2$, it cannot have a trisecant line, but may have 
a foursecant plane. A $5$-secant plane is not possible, but there could be a
$5$-secant $3$-plane; for hyperelliptic $C$, there could even be a $6$-secant-$3$-plane.

The proof even works for an arbitrary divisor on $C$ without assuming that
its support consists of pairwise distinct points.

In the last section, we briefly comment on our assumption of linear normality of $C$ 
and give two applications of our methods, on the question of generation of ideal
sheaves by quadrics and on the cohomology of the square of the ideal sheaf.

\bigskip
These results were obtained in 1990--91 at UCLA, but remained unpublished
when I did not continue my academic career.
As I recently found that the techniques can be applied to study syzygies of curves, 
I decided to publish the results, after updating my notes with more recent developments.

I am grateful to Rob Lazarsfeld for many helpful discussions and encouragement.  
I am also indebted to Aaron Bertram who first asked the questions 
answered by (\ref{thm32}) and (\ref{prop42}) (they were communicated
to me by Lazarsfeld) which turned out to be pivotal for this paper.
Furthermore, I profited from discussions with Mark Green and Christoph Rippel.

\section*{Notations and conventions}

We work over an algebraically closed field $K$ of arbitrary characteristic.

Given a curve $C$, 
we will write $pr_1$, $pr_2$ for the projections on $C \times C$,
and $\Delta$ for the diagonal.  On $C \times C$, we define sheaves 
associated with a line bundle $L$  as follows:  
$M_L=(pr_1)_*\big(pr_2^*L\otimes \O_{C\times C}(-\Delta)\big)$,
$R_L=(pr_1)_*\big(pr_2^*L\otimes \O_{C\times C}(-2\Delta)\big)$.

If $L$ is very ample and $C \lto \P^r$ is the embedding by $H^0L$, 
then $M_L=\Omega^1_\P(1)\vert_C$, $R_L=L\otimes N^{\vee}_{C/\P}$.

$pr_i$ also denotes the projections from 
$C^3=C \times C \times C$ onto the $i$-th
component, and $pr_{1,2}$ denotes the projection on the first two components.

$\Delta_{i,j}$ is the diagonal $\{(x_1,x_2,x_3)|x_i=x_j\}\subset C\times C\times C$.

For a line bundle $L$, let
$M_{2,L}=(pr_{1,2})_*\big(pr_3^*L\otimes\O_{C\times C\times C}(-\Delta_{1,3}-\Delta_{2,3})\big)$.
If $L$ is very ample, then $M_{2,L}$ is a vector bundle on $C\times C$.

For very ample $L$ so that the ideal of $C$ in the embedding by $H^0L$ is generated 
by quadrics, we write $P_L=\Ker\big(H^0\I_C(2)\otimes\O_C\lto N^\vee_{C/\P}(2)\big)$.

\section{Deforming sets of points}

Let $C$ be a curve of genus $g$, embedded in $\P^r$ by a complete linear system of 
degree $d=g+r$ for some $r \ge g +2$. Then $\deg(C)\ge 2g+2$ and $C$ 
satisfies $(N_1)$, i.e., the homogeneous ideal of $C$ is generated by quadrics.
Note that $C$ imposes $h^0\O_C(2)=2d+1-g= 2r+1+g$ conditions on quadrics.

In the situation of (\ref{thm13}), 
$C$ is uniquely determined as the intersection of the quadrics containing 
$\Gamma$, since $\Gamma$ consists of $n \ge 2r+3+2g=2d+3>2d$ points.  

In the situation of (\ref{thm14}), 
the same holds for general $\Gamma$, because the residual divisor for a quadric 
through $n$ general points on $C$ consists of a section of a line bundle of degree
$2d-n\le g-5<g$, and the general such line bundle has only the zero section.

As $F(n,g)$ is injective at $\Gamma$ under our assumptions, it suffices to
show that the induced map on Zariski tangent spaces is an isomorphism.
We will use the following local criterion:

\begin{proposition}\label{prop21}
Let $C$ be a curve of genus $g=\alpha$,
embedded into $\P^r$ by a non-special complete linear system.  
Assume that $C$ satisfies $(N_1)$.
Let $\Gamma=\{x_1,\ldots,x_n\} \subset C$
with $H^0\O_C(2-\Gamma)=0$ and set $B=\omega_C\otimes \O_C(\Gamma-2)$.
The following conditions are equivalent\textup{:}
\begin{itemize}
\item[\textup{(i)}] $F(n,g)$ induces a bijective map on Zariski tangent spaces
in $\Gamma$.
\item[\textup{(ii)}] The multiplication map $H^0\big(\I_C(2)\big)\otimes H^0B \lto H^0 \big(N^{\vee}(2)
\otimes B\big)$ is surjective.
\end{itemize}
\end{proposition}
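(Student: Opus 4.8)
The plan is to compute the two Zariski tangent spaces at $\Gamma$ explicitly and to reduce the asserted equivalence, via a diagram chase, to a dimension count whose only substantive input is a compatibility of the multiplication map of (ii) with Serre duality. I would begin with the cohomological bookkeeping that makes everything fit. Since $L=\O_C(1)$ is non-special, the restricted Euler sequence gives $H^1(T_\P|_C)=0$, hence $H^1(N_{C/\P})=0$; and $H^0\O_C(2-\Gamma)=0$ is, by Serre duality, the vanishing $H^1B=0$. Tensoring the sequence $0\lto P_L\lto H^0\I_C(2)\otimes\O_C\lto N^\vee_{C/\P}(2)\lto 0$, exact by $(N_1)$, with $B$ and taking cohomology then yields $H^1\big(N^\vee_{C/\P}(2)\otimes B\big)=0$ together with an isomorphism $\Coker(\mu)\cong H^1(P_L\otimes B)$, where $\mu$ denotes the multiplication map of (ii); dualising the first vanishing gives $H^0\big(N_{C/\P}(-\Gamma)\big)=0$. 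Finally, using $(N_0)$, $(N_1)$ and $H^0\O_C(2-\Gamma)=0$ one checks that $\Ker M(\Gamma)=H^0\I_C(2)$, that $\rk M(\Gamma)=2r+1+g$ is exactly the rank defining $P(n,\alpha)$, and that $\Coker M(\Gamma)\cong H^1\O_C(2-\Gamma)\cong H^0(B)^\vee$.

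For the tangent spaces: the Hilbert scheme of points is smooth, so $T_\Gamma P(n)=H^0(N_{\Gamma/\P})=\bigoplus_i T_{x_i}\P^r$, and because $\rk M(\Gamma)$ equals the determinantal bound, $T_\Gamma P(n,\alpha)=\Ker\phi$, where $\phi\colon\bigoplus_i T_{x_i}\P^r\lto\Hom\big(H^0\I_C(2),H^0(B)^\vee\big)=\big(H^0\I_C(2)\otimes H^0B\big)^\vee$ is the linearisation of the vanishing of the maximal minors (the differential of the Schur complement). As $(dQ)_{x_i}$ annihilates $T_{x_i}C$ for $Q\in H^0\I_C(2)$, this $\phi$ factors as $\bigoplus_i T_{x_i}\P^r\twoheadrightarrow H^0\big(N_{C/\P}|_\Gamma\big)$ followed by a map $\overline\phi\colon H^0\big(N_{C/\P}|_\Gamma\big)\lto\big(H^0\I_C(2)\otimes H^0B\big)^\vee$ sending $\overline v\otimes Q$ to the class in $\Coker M(\Gamma)$ of $\big(\overline Q(x_i)(\overline v_i)\big)_i$, where $\overline Q\in H^0\big(N^\vee_{C/\P}(2)\big)$ is the image of $Q$ under the surjection of $(N_1)$. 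On the other hand $H(n,\alpha)$ is smooth over the Hilbert scheme of curves, which is smooth at $[C]$ since $H^1(N_{C/\P})=0$, with fibre the space of $n$-tuples of distinct points of $C$; hence $T_\Gamma H(n,\alpha)$ sits in an exact sequence $0\lto\bigoplus_i T_{x_i}C\lto T_\Gamma H(n,\alpha)\lto H^0(N_{C/\P})\lto 0$, and $F(n,\alpha)$ induces the identity on $\bigoplus_i T_{x_i}C$ and the evaluation map $\mathrm{ev}\colon H^0(N_{C/\P})\lto H^0\big(N_{C/\P}|_\Gamma\big)$ on the quotients. Comparing this exact sequence with $0\lto\bigoplus_i T_{x_i}C\lto\Ker\phi\lto\Ker\overline\phi\lto 0$ through $dF_\Gamma$ and applying the snake lemma, one finds that $dF_\Gamma$ is bijective if and only if $\mathrm{ev}\colon H^0(N_{C/\P})\lto\Ker\overline\phi$ is an isomorphism.

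It remains to study this last map. It is injective because $H^0\big(N_{C/\P}(-\Gamma)\big)=0$, and its image lies in $\Ker\overline\phi$ because, by $(N_0)$, the contraction $\overline Q(s)$ of $\overline Q$ with a global section $s$ of $N_{C/\P}$ is a global section of $\O_C(2)$, whose restriction to $\Gamma$ therefore lies in $\Img M(\Gamma)$. Hence $dF_\Gamma$ is bijective if and only if $\dim\Ker\overline\phi=h^0(N_{C/\P})$. Now I would identify the transpose $\overline\phi^\vee\colon H^0\I_C(2)\otimes H^0B\lto H^0\big(N_{C/\P}|_\Gamma\big)^\vee$ with the composite of $\mu$ and the restriction $H^0\big(N^\vee_{C/\P}(2)\otimes B\big)\lto H^0\big((N^\vee_{C/\P}(2)\otimes B)|_\Gamma\big)$, under the residue duality $H^0\big(N_{C/\P}|_\Gamma\big)^\vee\cong H^0\big((N^\vee_{C/\P}(2)\otimes B)|_\Gamma\big)$; this is the compatibility of the Serre duality pairing on $C$ with cup product. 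Since $H^1(N_{C/\P})=0$ makes that restriction injective (its kernel is $H^0(N^\vee_{C/\P}\otimes\omega_C)\cong H^1(N_{C/\P})^\vee$), we get $\rk\overline\phi=\rk\mu$, so $\dim\Ker\overline\phi=n(r-1)-\rk\mu$. Finally, the sequence $0\lto N_{C/\P}(-\Gamma)\lto N_{C/\P}\lto N_{C/\P}|_\Gamma\lto 0$ together with $H^1(N_{C/\P})=H^0(N_{C/\P}(-\Gamma))=0$ gives $n(r-1)-h^0(N_{C/\P})=h^1(N_{C/\P}(-\Gamma))$, which Serre duality identifies with $h^0\big(N^\vee_{C/\P}(2)\otimes B\big)$. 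Thus $dF_\Gamma$ is bijective if and only if $\rk\mu=h^0\big(N^\vee_{C/\P}(2)\otimes B\big)$, i.e. if and only if $\mu$ is surjective, which is (ii).

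The step I expect to be the genuine obstacle is the identification $\overline\phi^\vee=(\text{restriction to }\Gamma)\circ\mu$: translating the differential-of-minors description of $T_\Gamma P(n,\alpha)$ into the intrinsic multiplication map forces one to match the ad hoc pairing ``evaluate $(dQ)_{x_i}(\overline v_i)$ and test it modulo $\Img M(\Gamma)$ against $H^0B$'' with the Serre duality pairing on $C$, keeping careful track of the twist by $\O_C(-\Gamma)$ which encodes the residues at the points of $\Gamma$. Everything else — the tangent space of the Hilbert scheme of points, the smoothness of $H(n,\alpha)$ over the Hilbert scheme of curves, the linear algebra of determinantal loci, and the Riemann--Roch bookkeeping — is routine.
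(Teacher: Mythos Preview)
Your argument is correct and follows essentially the same route as the paper: both compute $T_\Gamma P(n,\alpha)$ via the determinantal description, filter $T_\Gamma H(n,\alpha)$ by deformations of $\Gamma$ in $C$ versus deformations of $C$ in $\P^r$, and then invoke Serre duality to identify the obstruction with the cokernel of the multiplication map. Your presentation is somewhat more explicit than the paper's---you carry out the dimension count $\dim\Ker\overline\phi=n(r-1)-\rk\mu$ and verify the identification $\overline\phi^\vee=(\text{restriction})\circ\mu$ in detail, whereas the paper passes directly from the injectivity of $f\colon H^1\big(N_{C/\P}(-\Gamma)\big)\to\Hom\big(H^0\I_\Gamma(2),H^1\I_\Gamma(2)\big)$ to the surjectivity of its Serre dual---but the two formulations are equivalent and the identification you flag as the ``genuine obstacle'' is exactly the step the paper leaves implicit.
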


\begin{proof}
The Zariski tangent space of $H(n,g)$ has a filtration with two components,
namely $H^0N_{\Gamma/ C}$ representing deformations of $\Gamma$ in $C$,
and $H^0N_{C/\P}$ representing deformations of $C$ in $\P^r$.

$P(n,g)$ is a determinantal variety, and its Zariski tangent space at $\Gamma$
is obtained by pullback from the generic determinantal variety 
$M_k=\{A \in K^{m \times n}: \rk(A) \le k\}$ (see \cite[\hbox{${\rm I}\!{\rm I}$.2}]{ACGH}).
The Zariski tangent space at $A \in M_k-M_{k-1}$ is 
$T_A(M_k)=\{B \in K^{m \times n}:B\cdot \Ker(A) \subset \Img(A) \}$. 

In our case, $T_{\Gamma}\big(P(n)\big)=H^0N_{\Gamma/\P}$,
$\Ker(M)=H^0\I_{\Gamma}(2)$, $\Img(M)=\Img\big(H^0\O_{\P^r}(2)\lto H^0\O_{\Gamma}(2)\big)$.
Therefore
\begin{equation*}
T_{\Gamma}\big(P(n,g)\big)=\Ker\big(H^0N_{\Gamma/\P} \lto
\Hom(H^0\I_{\Gamma}(2),H^1\I_{\Gamma}(2))\big).
\end{equation*}

Now consider the following diagram:
\begin{equation*}
\xymatrix{
& 0 \ar[d] \\
& H^0N_{\Gamma/C} \ar[d] \\
& H^0N_{\Gamma/\P} \ar[r] \ar[d] & \Hom\big(H^0\I_{\Gamma}(2),H^1\I_{\Gamma}(2)\big) \\
H^0N_{C/\P} \ar[r] & H^0(N_{C/\P}\vert_\Gamma) \ar[r] \ar[d]
& H^1\big(N_{C/\P}\otimes\O_C(-\Gamma)\big) \ar[r] & H^1N_{C/\P} \\
& 0
}
\end{equation*}
The first map in the bottom line is injective: $H^0\big(N_{C/\P}\otimes
\O_C(-\Gamma)\big)=0$, because $\Gamma$ determines $C$, hence
no non-trivial deformation of $C$ can fix $\Gamma$.

$H^1N_{C/\P}=0$, because $C$ is embedded by a nonspecial linear
system, hence $C$ is unobstructed.

$H^0N_{\Gamma/C}$ classifies deformations of $\Gamma$ inside $C$. 
This vector space lies in the Zariski tangent space of $P(n,g)$ at
$\Gamma$, and we obtain a map
\begin{equation*}
H^0(N_{C/\P}\vert_{\Gamma}) \lto \Hom\big(H^0\I_{\Gamma}(2),H^1\I_{\Gamma}(2)\big).
\end{equation*}

$H^0N_{C/\P}$ classifies deformations of $C$ in $\P^r$. As 
the deformed curve lies on the same number of quadrics, 
this vector space also lies in the Zariski tangent space of $P(n,g)$
at $\Gamma$.

We obtain: 
$C$ deforms with $\Gamma$ if and only if the map
\begin{equation*}
f:H^1\big(N_{C/\P} \otimes \O_C(-\Gamma)\big) \lto
\Hom\big(H^0\I_{\Gamma}(2),H^1\I_{\Gamma}(2)\big)
\end{equation*}
 is injective.

We identify $H^0\I_{\Gamma}(2) = H^0\I_C(2)$ and
$H^1\I_{\Gamma}(2)=H^1\I_{\Gamma,C}(2)=H^1\O_C(2-\Gamma)$,
and dualize $f$:
\begin{equation*}
\xymatrix{
\Hom\big(H^1\O_C(2-\Gamma),H^0\I_C(2)\big) \ar[r] \ar[d]^\cong & H^1(N_{C/\P} \otimes
\O_C(-\Gamma))^{\vee} \\
H^1\O_C(2-\Gamma)^{\vee} \otimes H^0\I_C(2)
}
\end{equation*}
Applying Serre duality, we find a map
\begin{equation*}
f^\vee\colon H^0\big(\omega_C\otimes\O_C(\Gamma-2)\big)\otimes H^0\I_C(2) \lto
H^0\big(N_{C/\P}^{\vee}\otimes \omega_C\otimes\O_C(\Gamma)\big)
\end{equation*}
which is the composition
\begin{align*}
H^0\big(\omega_C\otimes\O_C(\Gamma-2)\big)\otimes H^0\I_C(2) & \lto
H^0\big(\omega_C\otimes\O_C(\Gamma-2)\big)\otimes H^0N_{C/\P}^{\vee}(2) \\
& \lto H^0\big(N_{C/\P}^{\vee}\otimes \omega_C\otimes\O_C(\Gamma)\big).
\end{align*}

$C$ deforms with $\Gamma$ if and only if $f^\vee$ is surjective.
\end{proof}

\begin{proposition}\label{prop22}
Let $L$ be a very ample line bundle on a curve $C$ and assume that the
ideal sheaf of $C$ is generated by quadrics. There is an exact sequence
\begin{multline*}
0 \lto \wedge^2 M_L \lto (pr_1)_*\big(pr_2^*L\otimes pr_3^*L\otimes
\O_{C\times C\times C}(-\Delta_{1,2}-\Delta_{1,3}-\Delta_{2,3})\big) \\ 
\lto \big(H^0\I_C(2)\big)\otimes\O_C \lto N_{C/\P}^{\vee}(2) \lto 0.
\end{multline*}
\end{proposition}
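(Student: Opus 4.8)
The plan is to realize the four-term sequence as the splice of two short exact sequences. The right-hand one,
\[
0\lto P_L\lto H^0\I_C(2)\otimes\O_C\lto N^\vee_{C/\P}(2)\lto 0,
\]
comes essentially for free from $(N_1)$: the homogeneous ideal of $C$ being generated by quadrics makes $\I_C(2)$ globally generated, so restricting $H^0\I_C(2)\otimes\O_\P\twoheadrightarrow\I_C(2)$ to $C$ yields a surjection onto $\I_C(2)\otimes\O_C=N^\vee_{C/\P}(2)$; let $P_L$ be its kernel. It then remains to produce a short exact sequence $0\to\wedge^2 M_L\to Q_L\to P_L\to 0$, where $Q_L$ is the direct image occurring in the middle of the asserted sequence. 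Throughout I write $M_L=\Ker(H^0L\otimes\O_C\lto L)=\Omega^1_\P(1)\vert_C$, a subbundle of the trivial bundle $H^0L\otimes\O_C$.

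The first step is to construct a natural map $\Phi\colon Q_L\lto H^0\I_C(2)\otimes\O_C$. Forgetting the twist by $-\Delta_{1,2}-\Delta_{1,3}-\Delta_{2,3}$ gives an inclusion $Q_L\hookrightarrow(pr_1)_*(pr_2^*L\otimes pr_3^*L)=H^0L\otimes H^0L\otimes\O_C$, which I compose with the canonical quotient $H^0L\otimes H^0L\twoheadrightarrow\Sym^2 H^0L=H^0\O_\P(2)$. A local section of $Q_L$ over an open set $U$ is a section of $pr_2^*L\otimes pr_3^*L$ on $U\times C\times C$ vanishing along all three diagonals; its value $q_x\in H^0L\otimes H^0L$ at a point $x\in U$ vanishes along $\Delta_{2,3}$, which says precisely that $q_x$ lies in the kernel of multiplication $H^0L\otimes H^0L\to H^0(L^{\otimes2})=H^0\O_C(2)$. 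Hence the image of $q_x$ in $H^0\O_\P(2)$ lies in $H^0\I_C(2)$, so $\Phi$ indeed takes values in $H^0\I_C(2)\otimes\O_C$.

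The second step is to compute $\Ker\Phi$ and $\Img\Phi$. Vanishing of $q_x$ along $\Delta_{1,2}$ and $\Delta_{1,3}$ forces $q_x\in H^0(L(-x))\otimes H^0(L(-x))$, the fiber of $M_L\otimes M_L$ at $x$. This lets one check that the inclusion $\wedge^2 M_L\hookrightarrow\wedge^2 H^0L\otimes\O_C\subset H^0L\otimes H^0L\otimes\O_C$ factors through $Q_L$---an antisymmetric tensor with entries vanishing at $x$ restricts to zero along each diagonal---and, since $\wedge^2 H^0L=\Ker(H^0L\otimes H^0L\to\Sym^2 H^0L)$, that $\Ker\Phi=\wedge^2 M_L$ exactly. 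For the image, note that $q_x\in H^0(L(-x))\otimes H^0(L(-x))$ makes $\Phi(s)_x$ a quadric lying in $\Sym^2 H^0(L(-x))$, i.e.\ a quadric singular at the point $x$; because $C$ is smooth one has $\mathfrak{m}_{\P,x}^2\cap\I_{C,x}=\mathfrak{m}_{\P,x}\I_{C,x}$, so a quadric through $C$ that is singular at $x$ is killed in $N^\vee_{C/\P}(2)\otimes k(x)$, and hence $\Phi(s)$ is a section of $P_L$. Conversely, a local section $q$ of $P_L$ is a family of quadrics through $C$ singular at the moving point, so a section of the subbundle $\Sym^2 M_L\subset\Sym^2 H^0L\otimes\O_C$; lifting along the locally split surjection $M_L^{\otimes2}\twoheadrightarrow\Sym^2 M_L$ gives a section $\tilde q$ of $M_L^{\otimes2}$, and since $\tilde q_x\in H^0(L(-x))\otimes H^0(L(-x))$ has vanishing image in $H^0\O_C(2)$, $\tilde q$ defines a section of $Q_L$ with $\Phi(\tilde q)=q$. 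So $\Img\Phi=P_L$, and splicing the two short exact sequences proves the proposition.

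The step I expect to be the main obstacle is the inclusion $\Img\Phi\subseteq P_L$: one must observe that vanishing along $\Delta_{1,2}$ and $\Delta_{1,3}$ forces the associated quadric to be \emph{singular} at the base point rather than merely incident to $C$ there, and then convert this---using the local-algebra identity $\mathfrak{m}_{\P,x}^2\cap\I_{C,x}=\mathfrak{m}_{\P,x}\I_{C,x}$, which relies on smoothness of $C$---into the statement that such a quadric is annihilated by the restriction $H^0\I_C(2)\otimes\O_C\to N^\vee_{C/\P}(2)$. Everything else amounts to keeping track of which diagonal imposes which linear condition. In particular the argument is characteristic-free: it uses only the canonical exact sequence $0\lto\wedge^2 H^0L\lto H^0L\otimes H^0L\lto\Sym^2 H^0L\lto 0$, never a splitting of it.
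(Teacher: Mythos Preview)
Your argument is correct and genuinely different from the paper's. The paper pushes the diagram of diagonals on $C^3$ down first to $C\times C$ (introducing the auxiliary rank-two bundle $M_{2,L}$), then to $C$, and assembles the result through a chain of $3\times 3$ diagrams culminating in one that exhibits $P_L$ as the kernel of $S^2M_L\to R_{L^2}$; the identification of the last map with the canonical $H^0\I_C(2)\otimes\O_C\to N^\vee_{C/\P}(2)$ is then argued separately by comparison with a quadric through $C$. You bypass all of this by writing down $\Phi$ directly and analysing it fibrewise, the key geometric point being that $(Q_L)_x$ lands in $H^0(L(-x))\otimes H^0(L(-x))$ and hence $\Phi$ produces quadrics \emph{singular} at the moving point, which the local identity $\mathfrak m_{\P,x}^2\cap\I_{C,x}=\mathfrak m_{\P,x}\I_{C,x}$ translates into membership in $P_L$. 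Two small points worth making explicit: the sheaf-level equality $\Ker\Phi=\wedge^2 M_L$ follows cleanly because $\Sym^2 M_L\hookrightarrow\Sym^2 H^0L\otimes\O_C$ is injective (so an element of $M_L\otimes M_L$ dies in $\Sym^2 H^0L$ iff it already dies in $\Sym^2 M_L$), and the passage from ``$\Phi(s)_x$ vanishes in $N^\vee_{C/\P}(2)\otimes k(x)$ for every $x$'' to ``$\Phi(s)\in P_L$'' uses that $N^\vee_{C/\P}$ is locally free. What the paper's longer route buys is the full diagram with rows $0\to P_L\to H^0\I_C(2)\otimes\O_C\to R_L\otimes L\to 0$ and $0\to R_{L^2}\to M_{L^2}\to\omega_C\otimes L^2\to 0$, which is reused later (for instance in the criterion on trisecants in \S4.2); your approach is shorter and makes the geometry transparent but does not produce that extra structure.
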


\begin{proof} 
On $C \times C \times C$, we have the following diagram with exact
rows and columns
\begin{equation}\label{diag1}
\begin{aligned}
\xymatrix{
& 0 \ar[d] & 0 \ar[d] & 0 \ar[d] \\
0 \ar[r] & \O(-\Delta_{1,3} -\Delta_{2,3}) \ar[r] \ar[d] & 
\O(-\Delta_{2,3}) \ar[r] \ar[d] &
\O_{\Delta_{1,3}}(-\Delta) \ar[r] \ar[d] & 0\\
0 \ar[r] & \O(-\Delta_{1,3}) \ar[r] \ar[d] & \O \ar[r] \ar[d] &
\O_{\Delta_{1,3}} \ar[r] \ar[d] & 0 \\
0 \ar[r] & \O_{\Delta_{2,3}}(-\Delta) \ar[r] \ar[d] & 
\O_{\Delta_{2,3}} \ar[r] \ar[d] & 
\O_{\Delta} \ar[r] \ar[d] & 0\\
& 0 & 0 & 0 
}
\end{aligned}
\end{equation}
Tensoring with $pr_3^*L$ and applying $(pr_{1,2})_*$, we obtain a diagram on $C\times C$
\begin{equation}\label{diag2}
\begin{aligned}
\xymatrix{
& 0 \ar[d] & 0 \ar[d] & 0\ar[d] \\
0 \ar[r] & M_{2,L} \ar[r] \ar[d] & pr_2^*M_L \ar[r] \ar[d] & pr_1^*L\otimes \O(-\Delta) 
\ar[r] \ar[d] & 0\\
0 \ar[r] & pr_1^*M_L \ar[r] \ar[d] & H^0L\otimes\O \ar[r] \ar[d] & pr_1^*L \ar[r] \ar[d] & 0\\
0 \ar[r] & pr_2^*L\otimes\O(-\Delta) \ar[r] \ar[d] & pr_2^*L \ar[r] \ar[d]
& L\otimes\O_{\Delta} \ar[r] \ar[d] & 0\\
& 0 & 0 & 0
}
\end{aligned}
\end{equation}

Applying $(pr_1)_*$ to the second exterior power of the top horizontal sequence, we find that
$(pr_1)_*(\wedge^2M_{2,L})$ injects into 
$(pr_1)_*(\wedge^2pr_2^*M_L)=(H^0(\wedge^2M_L))\otimes\O_C$. However, $\wedge^2M_L$
has no global sections, hence $(pr_1)_*(\wedge^2M_{2,L})=0$.

Considering next $(pr_1)_*$ of the left vertical sequence, we find a monomorphism
$\wedge^2M_L\lto (pr_1)_*\big(M_{2,L}\otimes pr_2^*L\otimes\O(-\Delta)\big)$.

We now tensor diagram (\ref{diag2}) with $pr_2^*L\otimes\O(-\Delta)$, apply $(pr_1)_*$
and divide the four direct image sheaves in the top left square by
the image of $\wedge^2M_L$.

The middle horizontal row of the resulting diagram appears as the bottom row 
of the following diagram
\begin{equation*}
\xymatrix{
& \wedge^2 M_L \ar@{=}[r] \ar@{^{(}->}[d] & \wedge^2M_L \ar@{^{(}->}[d] \\
0 \ar[r]  & M_L\otimes M_L \ar[r] \ar@{>>}[d] & M_L\otimes H^0L \ar[r] \ar@{>>}[d] & M_L\otimes L
\ar[r] \ar@{=}[d] & 0\\
0 \ar[r] & S^2M_L \ar[r] & \F \ar[r] & M_L\otimes L \ar[r] & 0
}
\end{equation*}
We find that $\F$ (defined here as a push-down) is part of the filtration of $S^2H^0L\otimes\O_C$, 
induced by taking second symmetric powers of
\begin{equation*}
0 \lto M_L \lto H^0L\otimes\O_C \lto L \lto 0.
\end{equation*}
For the full filtration, see the middle row of the next commutative diagram 
\begin{equation}
\begin{aligned}\label{diag2a}
\xymatrix{
& H^0\I_C(2)\otimes \O_C \ar@{^{(}->}[d] \ar@{=}[r]  & H^0\I_C(2)\otimes \O_C \ar@{^{(}->}[d] \\
0 \ar[r] & \F \ar[r] \ar@{>>}[d] & S^2H^0L\otimes\O_C \ar[r] \ar@{>>}[d] & L^2 \ar[r] \ar@{=}[d] & 0\\
0 \ar[r] & M_{L^2} \ar[r] & H^0L^2\otimes\O_C \ar[r] & L^2 \ar[r] & 0
}
\end{aligned}
\end{equation}
The left vertical sequence thus is the middle vertical sequence of the desired result diagram, and
we obtain
\begin{equation}\label{diag3}
\begin{aligned}
\xymatrix{
& 0 \ar[d] & 0 \ar[d] & 0 \ar[d] \\
0 \ar[r] & P_L \ar[r] \ar[d] & H^0\I_C(2)\otimes\O_C \ar[r] \ar[d] & 
R_L\otimes L \ar[r] \ar[d] & 0\\ 
0 \ar[r] & S^2M_L \ar[r] \ar[d] & \F \ar[r] \ar[d] & M_L\otimes L \ar[r] \ar[d] & 0\\
0 \ar[r] & R_{L^2} \ar[r] \ar[d] & M_{L^2} \ar[r] \ar[d] & \omega_C\otimes L^2 \ar[r] \ar[d] & 0\\
& 0 & 0 & 0 
}
\end{aligned}
\end{equation}

The sequence from our proposition arises from the top row, taking into account the
exact sequence
\begin{equation*}
0\lto \wedge^2M_L\lto (pr_1)_*\big(pr_2^*L\otimes pr_3^*L\otimes
\O_{C\times C\times C}(-\Delta_{1,2}-\Delta_{1,3}-\Delta_{2,3})\big)\lto P_L\lto 0
\end{equation*}
used to construct $P_L$.

Finally, we need to show that the map $\big(H^0\I_C(2)\big)\otimes\O_C \lto N^{\vee}_{C/\P}(2)$
is induced by the canonical map $\I_C\lto \I_C/\I_C^2\cong N^\vee_{C/\P}$.
This can be done by a direct calculation, or as follows:

Note that diagram (\ref{diag2}) above can alternatively be obtained
as pullback of a similar diagram that lives on the blow-up of $\P^r\times\P^r$ in the diagonal.

Now given an arbitrary quadric $Q$ containing $C$, pulling the diagram back to the blow-up 
of $Q\times Q$ in its diagonal, and taking the appropriate direct images, 
we obtain the top map of the following diagram
\begin{equation}\label{diag4}
\begin{aligned}
\xymatrix{
\big(H^0\I_Q(2)\big)\otimes\O_Q \ar[r]^-{\sim} \ar[d] & N^{\vee}_{Q/{\bf P}}(2) = \O_Q \ar[d] \\
\big(H^0\I_C(2)\big)\otimes\O_C \ar[r] & N^{\vee}_{C/{\bf P}}(2).
}
\end{aligned}
\end{equation}
The commutativity of diagram (\ref{diag4}) now provides the desired identification of the map.
\end{proof}

\section{Conclusion of the proof}

\begin{theorem}\label{thm31}
$H^0\big(\I_C(2)\big) \otimes H^0B \lto H^0 \big(N^{\vee}_C(2)
\otimes B\big)$ is surjective\textup{,} if one of the following conditions holds\textup{:}
\begin{itemize}
\item[\textup{(i)}] $g=0$\textup{,} $d=\deg(C)\ge 1$\textup{,} $\deg B\ge 1$\textup{;}
\item[\textup{(ii)}] $g=1$\textup{,} $d\ge 4$\textup{,} $\deg B\ge 3$\textup{;}
\item[\textup{(iii)}] $g\ge 2$\textup{,} $d = 2g+2$\textup{,} $\deg B\ge 2g+3$\textup{;}
\item[\textup{(iv)}] $g\ge 2$\textup{,} $d \ge 2g+3$\textup{,} $\deg B\ge 2g+1$.
\end{itemize}
\end{theorem}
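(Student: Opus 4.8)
The plan is to reduce the surjectivity to a single cohomology vanishing on $C\times C\times C$ and then to prove that vanishing by pushing a line bundle down through the coordinate diagonals, filtering the resulting direct image as in \cite{Laz}. For the reduction, split the four term exact sequence of Proposition~\ref{prop22} into $0\to P_L\to H^0\I_C(2)\otimes\O_C\to N^\vee_{C/\P}(2)\to 0$ and $0\to\wedge^2M_L\to E\to P_L\to 0$, where $E=(pr_1)_*\big(pr_2^*L\otimes pr_3^*L\otimes\O(-\Delta_{1,2}-\Delta_{1,3}-\Delta_{2,3})\big)$, and tensor both with $B$. From the first sequence the cokernel of the multiplication map is the kernel of $H^1(P_L\otimes B)\to H^0\I_C(2)\otimes H^1B$; since $\deg B>2g-2$ gives $H^1B=0$, surjectivity is equivalent to $H^1(P_L\otimes B)=0$. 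From the second sequence and the vanishing of $H^2$ on a curve, $H^1(P_L\otimes B)$ is a quotient of $H^1(E\otimes B)$; and the projection formula gives $E\otimes B=(pr_1)_*\mathcal G$ with $\mathcal G=pr_1^*B\otimes pr_2^*L\otimes pr_3^*L\otimes\O(-\Delta_{1,2}-\Delta_{1,3}-\Delta_{2,3})$, so the Leray five term exact sequence ($C$ being a curve) shows $H^1(E\otimes B)$ injects into $H^1(C^3,\mathcal G)$. Hence it suffices to prove $H^1(C^3,\mathcal G)=0$, with $L=\O_C(1)$ of degree $d$; under the dictionary $\deg B=n-2r-2$ the four hypotheses on $\deg B$ are exactly the hypotheses on $n$ in Theorem~\ref{thm13} (and weaker ones for small genus).

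To compute $H^1(C^3,\mathcal G)$, push $\mathcal G$ forward along $pr_{1,2}\colon C^3\to C^2$: this identifies $(pr_{1,2})_*\mathcal G$ with $pr_1^*B\otimes pr_2^*L\otimes\O(-\Delta)\otimes M_{2,L}$, and the sheaf on a fibre of $pr_{1,2}$ is $L(-x_1-x_2)$, which is nonspecial once $d\ge 2g+1$, so $R^1(pr_{1,2})_*\mathcal G=0$ and $H^1(C^3,\mathcal G)=H^1\big(C^2,\,pr_1^*B\otimes pr_2^*L\otimes\O(-\Delta)\otimes M_{2,L}\big)$. I would then resolve $M_{2,L}$ by the exact rows and columns of diagram~(\ref{diag2}), tensor through, and push down to $C$ along $pr_1$, using $(pr_1)_*\big(pr_2^*L\otimes\O(-\Delta)\big)=M_L$ and $(pr_1)_*\big(pr_2^*L\otimes\O(-2\Delta)\big)=R_L$ and checking that the relevant $R^1$'s vanish --- the point where $d=g+r\ge 2g+2$ is used. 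What remains is a short list of vanishings on $C$ itself, of the shapes $H^1\big(B\otimes M_L\otimes(\text{twist})\big)=0$ and $H^1\big(B\otimes R_L\otimes L\big)=H^1\big(N^\vee_{C/\P}(2)\otimes B\big)=0$. Since $M_L$, $R_L$ and $N^\vee_{C/\P}$ are as negative as a bundle on a curve embedded by a complete nonspecial system can be, each follows from a standard multiplication map argument --- using the defining sequences of $M_L$ and $R_L$, Serre duality, and the base point free pencil trick --- once $\deg B$ is large enough.

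The case analysis (i)--(iv) is then bookkeeping of the numerical thresholds, and it is where the genuine work sits. For $g=0$ and $g=1$ the bundles $M_L$, $R_L$, $N^\vee_{C/\P}$ are explicit --- split on $\P^1$, semistable of known slope on an elliptic curve --- so the vanishings are immediate under the mild bounds in (i), (ii). For $g\ge 2$ with $d\ge 2g+3$ there is enough positivity to run the argument with $\deg B\ge 2g+1$, which gives (iv). The delicate case is (iii), $g\ge 2$ and $d=2g+2$: here $L$ has minimal degree, the intermediate $R^1$-vanishings and the curve level multiplication maps become tight, and special effective divisors on $C$ --- exactly the phenomena behind the four secant planes and, for hyperelliptic $C$, the six secant $3$-planes noted in the introduction --- threaten to obstruct them. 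Showing that raising the bound to $\deg B\ge 2g+3$ is precisely enough to absorb every such obstruction is the step I expect to be the main obstacle, and it should also account for the sharpness recorded in~(\ref{ex34}).
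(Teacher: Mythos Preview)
Your reduction to the vanishing of $H^1(C^3,\mathcal G)$ is correct and matches the paper exactly (in fact your remark that the Leray five-term sequence already gives the injection $H^1(E\otimes B)\hookrightarrow H^1(\mathcal G)$, so one need not verify $R^i(pr_1)_*\mathcal G=0$, is a slight simplification of what the paper does). The pushforward to $C^2$ is also the paper's first move; it is what you do next that does not work.

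Your plan is to resolve the full bundle $M_{2,L}$ via diagram~(\ref{diag2}) and push to $C$. But tensoring the left column of~(\ref{diag2}) with $pr_1^*B\otimes pr_2^*L\otimes\O(-\Delta)$ and applying $(pr_1)_*$ literally reproduces the construction of Proposition~\ref{prop22}: the kernel you must control on $C$ is again $B\otimes P_L$ (with $\wedge^2M_L$ sitting inside), so you are back to $H^1(P_L\otimes B)=0$. Phrased differently, the ``short list of vanishings'' you anticipate contains $H^1(M_L\otimes M_L\otimes B)=0$, a genuine Koszul-type condition equivalent to the problem you started with, not a consequence of the base-point-free pencil trick. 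The claimed terminal vanishing $H^1(N^\vee(2)\otimes B)=0$ is likewise not ``standard'' in the required range. So the argument is circular as written.

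The idea you are missing is the one that drives the paper's Theorem~\ref{thm32}: before pushing down, filter one copy of $L$ by a \emph{general} line bundle $L'_3$ of degree $g+3$. Then $M_{2,L'_3}$ has rank exactly $2$, so $\wedge^2M_{2,L'_3}$ is a line bundle that one computes explicitly as $pr_1^*{L'_3}^{-1}\otimes pr_2^*{L'_3}^{-1}\otimes\O(\Delta)$. Taking the second exterior power of the left column of~(\ref{diag2}) then reduces everything to vanishings on $C$ and $C\times C$ that follow from the generic choice of $L'_3$ (conditions (i)--(iv) in the proof of~\ref{thm32}). This is why the case analysis (i)--(iv) of Theorem~\ref{thm31} is purely numerical --- one simply assigns the roles of $L_1,L_2,L_3$ according to which of $B,L,L$ has the largest degree --- and case (iii) needs no special treatment of secant planes or hyperelliptic phenomena; the bound $\deg B\ge 2g+3$ is just the condition $\deg L_3\ge 2g+3$ when $d=2g+2$ forces $B$ to play the role of $L_3$.
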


\begin{proof}[Proof of \textup{(\ref{thm31})}]  
By (\ref{prop22}) it suffices to show that
$H^1(C,(pr_1)_*\F)=0$ for $\F=pr_1^*B\otimes pr_2^*L\otimes
pr_3^*L\otimes \O(-\Delta_{1,2}-\Delta_{1,3}-\Delta_{2,3})$ where $L=\O_C(1)$.
If we knew that the higher direct images
$R^1(pr_1)_*\F$ and $R^2(pr_1)_*\F$ both vanish, then Leray's
spectral sequence implies that $H^1(pr_1)_*\F=H^1\F$, and the latter
vanishes by (\ref{thm32}) resp.\ (\ref{rmk33}) below.

To compute the higher direct images, we note that
$R^1(pr_{1,2})_*\F=pr_1^*B\otimes pr_2^*L\otimes\O(-\Delta)\otimes 
R^1(pr_{1,2})_*\big(pr_3^*L \otimes\O(-\Delta_{1,3}-\Delta_{2,3})\big)=0$,
because $L$ is non-special and very ample, hence $R^2(pr_1)_*\F=0$,
and $R^1(pr_1)_*\F=B\otimes R^1(pr_1)_*(M_{2,L} \otimes pr_2^*L\otimes
\O(-\Delta))$.

Now consider the left vertical sequence of diagram (\ref{diag2}) in the proof
of (\ref{prop22}).
After tensoring with $pr_2^*L\otimes\O(-\Delta)$ and applying $(pr_1)_*$, 
we find that the stalks of 
$R^1(pr_1)_*\big(M_{2,L}\otimes pr_2^*L\otimes\O(-\Delta)\big)$ sit in the exact sequence
\begin{multline*}
M_{L,x} \otimes H^0L(-x) \overset{f}\lto H^0 L^2(-2x)  
\lto  R^1(pr_1)_*\big(M_{2,L}
\otimes pr_2^*L\otimes\O(-\Delta)\big)_x \\
\lto M_{L,x} \otimes H^1L(-x).
\end{multline*}
$M_{L,x}$ can be canonically identified with $H^0L(-x)$, and $f$
with the multiplication map $H^0L(-x)\otimes H^0L(-x) \lto H^0L^2(-2x)$.
The assumption on $d$ implies that $f$ is surjective and that the right term vanishes 
for every $x \in C$, therefore $R^1(pr_1)_*\F=0$.
\end{proof}

\begin{proof}[Proof of \textup{(\ref{thm13})}]  
The line bundle $B=\omega_C\otimes\O_C(\Gamma-2)$
has degree $2g-2+n-2d=n-2r-2$ which is at least $2g+1$ (for $g\le r-3$)
resp.\ $2g+3$ (for $g=r-2$), therefore the theorem follows from
(\ref{prop21}) and (\ref{thm31}).
\end{proof}

\begin{proof}[Proof of \textup{(\ref{thm14})}]  
$B=\omega_C\otimes\O_C(\Gamma-2)$ is a general 
line bundle of degree $2g-2+n-2d\ge g+3$, and, going back to the proofs of 
(\ref{thm13}) and (\ref{thm31}), it suffices to show that $H^1\big(pr_1^*B\otimes pr_2^*\O_C(1)
\otimes pr_3^*\O_C(1)\otimes \O(-\Delta_{1,2}-\Delta_{1,3}-\Delta_{2,3})\big)=0$.
But that result will be shown as (\ref{eq1}) in the proof of (\ref{thm32}) below.
\end{proof}

\begin{theorem}\label{thm32}
Let $C$ be a curve of genus $g$\textup{,} let $L_1$\textup{,} $L_2$ and 
$L_3$ be line bundles on $C$ with
$\deg L_1\ge 2g+1$\textup{,} $\deg L_2\ge 2g+2$\textup{,} $\deg L_3\ge 2g+3$.
Then 
\begin{equation*}
H^1 \big(pr_1^*L_1\otimes pr_2^*L_2\otimes pr_3^*L_3\otimes
\O_{C \times C \times C}(-\Delta_{1,2}-\Delta_{1,3}-\Delta_{2,3})\big)=0.
\end{equation*}
\end{theorem}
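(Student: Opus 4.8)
The plan is to compute $H^1$ by pushing
$\F:=pr_1^*L_1\otimes pr_2^*L_2\otimes pr_3^*L_3\otimes\O(-\Delta_{1,2}-\Delta_{1,3}-\Delta_{2,3})$
forward along a projection, reducing the problem to a vanishing statement on $C$ itself, and then to dismantle the resulting direct image by a filtration as in \cite{Laz}. I would push forward along $pr_1$. Along a fibre $\{x\}\times C\times C$ the sheaf $pr_2^*L_2\otimes pr_3^*L_3\otimes\O(-\Delta_{1,2}-\Delta_{1,3}-\Delta_{2,3})$ restricts to $\big(L_2(-x)\big)\boxtimes\big(L_3(-x)\big)\otimes\O(-\Delta)$, while $pr_1^*L_1$ is constant along fibres; since $\deg L_2(-x)=\deg L_2-1\ge 2g+1$ and $\deg L_3(-x)=\deg L_3-1\ge 2g+2$, the sequence $0\to\O(-\Delta)\to\O\to\O_\Delta\to 0$ together with Künneth shows that the fibral $H^1$ and $H^2$ both vanish, the only substantive point being that the multiplication $H^0(L_2(-x))\otimes H^0(L_3(-x))\to H^0\big((L_2\otimes L_3)(-2x)\big)$ is surjective, which holds in every characteristic since both factors have degree $\ge 2g+1$ (Castelnuovo--Mumford). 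Hence $R^q(pr_1)_*\F=0$ for $q\ge1$, so by Leray $H^1(\F)=H^1\big(C,(pr_1)_*\F\big)$, and by the projection formula $(pr_1)_*\F=L_1\otimes\mathcal E$ with $\mathcal E:=(pr_1)_*\big(pr_2^*L_2\otimes pr_3^*L_3\otimes\O(-\Delta_{1,2}-\Delta_{1,3}-\Delta_{2,3})\big)$.

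It then remains to show $H^1(C,L_1\otimes\mathcal E)=0$. Removing $\Delta_{2,3}$ from $\F$ and pushing forward along $pr_1$ — all higher direct images again vanishing by the estimates above — produces a short exact sequence
\[
0\lto\mathcal E\lto M_{L_2}\otimes M_{L_3}\lto R_{L_2\otimes L_3}\lto 0
\]
on $C$, and removing $\Delta_{1,2}$ and $\Delta_{1,3}$ one at a time refines this into a filtration of $\mathcal E$ (hence of $L_1\otimes\mathcal E$) whose successive quotients are tensor products of $L_1$ with subspaces of $H^0(L_2)\otimes\O_C$ and with bundles $M_N$, $R_N$ for explicit line bundles $N$ on $C$; this is exactly the mechanism of diagrams (\ref{diag2})--(\ref{diag3}) in the proof of Proposition \ref{prop22}, run with the extra twist $pr_1^*L_1$. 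Because one strips off only the three diagonals and a bounded number of further points, every $N$ that occurs still has degree $\ge 2g+1$ (or $\ge2g$ at the places where only global generation and $H^1(N)=0$ are needed), and the hypotheses $\deg L_1\ge 2g+1$, $\deg L_2\ge 2g+2$, $\deg L_3\ge 2g+3$ are calibrated precisely so that this stays true.

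The genuine obstacle is the $H^1$-vanishing of the graded pieces, which in the worst cases takes the form $H^1\big(C,\,E\otimes M_A\otimes M_B\big)=0$ for a line bundle $E$ and line bundles $A,B$. This does \emph{not} follow from semistability and slope: already the main piece $L_1\otimes M_{L_2}\otimes M_{L_3}$ has slope $\le 2g-2$ once $g\ge3$, so Serre duality gives nothing for free, and the naïve long exact sequences reduce the statement only to itself. Following \cite{Laz}, the resolution is an induction that peels points off $A$ and off $B$ by means of $0\to M_{N(-p)}\to M_N\to\O_C(-p)\to 0$ (and, in the symmetric case $A=B$, its second-exterior-power analogue $0\to\wedge^2M_{N(-p)}\to\wedge^2M_N\to M_{N(-p)}\otimes\O_C(-p)\to 0$): each step trades $H^1\big(E\otimes M_N\otimes(\cdot)\big)$ for $H^1\big(E\otimes M_{N(-p)}\otimes(\cdot)\big)$ plus a term $H^1\big(E(-p)\otimes(\cdot)\big)$ of one lower tensor degree, the latter handled by the Castelnuovo--Mumford multiplication theorem because of the degree bounds. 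Carrying this out purely cohomologically, without appealing to the (characteristic-dependent) semistability of the $M_N$, is what makes the conclusion hold in arbitrary characteristic; the same device yields the variant (\ref{rmk33}) (the case $L_2=L_3$, where in characteristic $\ne2$ one may split $M_L\otimes M_L=S^2M_L\oplus\wedge^2M_L$) and the equality (\ref{eq1}), namely the case $L_2=L_3=\O_C(1)$ with $L_1$ a \emph{general} line bundle of degree $\ge g+3$, where genericity of $L_1$ permits relaxing its degree below $2g+1$ while running the same filtration. I expect the bookkeeping of this filtration and making the induction characteristic-free to be the main difficulty; everything else is diagram chasing plus classical regularity and normal-generation statements for curves.
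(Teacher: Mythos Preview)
Your reduction via Leray to $H^1(C,\,L_1\otimes\mathcal E)=0$ is correct, as is the exact sequence $0\to\mathcal E\to M_{L_2}\otimes M_{L_3}\to R_{L_2\otimes L_3}\to 0$. The gap is in what follows. That sequence does not filter $\mathcal E$: it presents $\mathcal E$ as a \emph{kernel}, so deducing $H^1(L_1\otimes\mathcal E)=0$ from it would require both the surjectivity of $H^0(L_1\otimes M_{L_2}\otimes M_{L_3})\to H^0(L_1\otimes R_{L_2\otimes L_3})$ and control of $H^1(L_1\otimes M_{L_2}\otimes M_{L_3})$, neither of which you establish. Removing $\Delta_{1,2}$ or $\Delta_{1,3}$ likewise yields sequences with $\mathcal E$ as a subsheaf, not as an extension of simpler pieces. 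Your peeling induction via $0\to M_{N(-p)}\to M_N\to\O_C(-p)\to 0$ then trades $H^1(L_1\otimes M_{L_2}\otimes M_{L_3})$ for the same group with $L_3$ replaced by $L_3(-p)$; the degree goes \emph{down}, and on $C$ there is no base case---at no stage does $M_{L_3'}$ become a line bundle or acquire a structure that closes the loop. The second-exterior-power variant you mention applies to $M_L$ on $C$, which has rank $r\ge g+2$, so $\wedge^2 M_L$ is again far from a line bundle.

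The paper's argument supplies exactly the missing base case, but by working on $C\times C$ rather than on $C$. It first peels $L_3$ down to a \emph{general} $L'_3$ of degree $g+3$ (your peeling idea, with genericity used to arrange four open conditions on $L'_3$); the side terms from this step live on $C\times C$ and are handled by the classical two-factor vanishing. For the remaining term it pushes forward along $pr_{1,2}$, not $pr_1$: the point is that $M_{2,L'_3}=(pr_{1,2})_*\big(pr_3^*L'_3\otimes\O(-\Delta_{1,3}-\Delta_{2,3})\big)$ has rank $h^0L'_3-2=2$, so $\wedge^2 M_{2,L'_3}$ is a line bundle, computed explicitly as $pr_1^*{L'_3}^{-1}\otimes pr_2^*{L'_3}^{-1}\otimes\O_{C\times C}(\Delta)$. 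Taking the second exterior power of the sequence $0\to M_{2,L'_3}\to pr_1^*M_{L'_3}\to pr_2^*L'_3\otimes\O(-\Delta)\to 0$ then reduces everything to explicit line-bundle cohomology on $C\times C$ and on $C$, governed by the generic choice of $L'_3$. This rank-two trick on $C\times C$ is the device your outline lacks; without it the induction does not terminate.
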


\begin{proof} 
Choose a line bundle $L'_3$ of degree $g+3$ with the following
properties:
\begin{itemize}
\item[\textup{(i)}] $L'_3$ is very ample and non-special,
\item[\textup{(ii)}] $H^1(L_1(p)\otimes {L'_3}^{-1})=0$ for general $p \in C$,
\item[\textup{(iii)}] $H^1( \wedge^2 M_{L'_3} \otimes L_1) =0$,
\item[\textup{(iv)}] $H^1(L_2\otimes {L'_3}^{-1})=0$.
\end{itemize}

Each of them separately holds on a Zariski-open subset of the Picard variety of
line bundles of degree $g+3$,
as long as $\deg\big(L_1(p)\otimes {L'_3}^{-1}\big)\ge g-1$ and
$\deg\big(L_2\otimes {L'_3}^{-1}\big)\ge g-1$.

Since $\deg(L_3\otimes {L'_3}^{-1}) \ge g$, we have $H^0(L_3\otimes {L'_3}^{-1}) \not= 0$.
If the divisor of the corresponding section consists of pairwise distinct points, we have a short exact sequence
\begin{multline*}
0 \lto pr_3^*L'_3 \otimes \O_{C\times C\times C}(-\Delta_{1,3}-\Delta_{2,3}) \lto 
pr_3^*L_3 \otimes \O_{C\times C\times C}(-\Delta_{1,3}-\Delta_{2,3}) \\
\lto \oplus_i\O_{x_3=p_i}(-p_i,-p_i) \lto 0;
\end{multline*}
in the general case, we will still have a short exact sequence starting with the two terms above.
However, the term on the right may not split as a direct sum, but instead have a filtration
with graded pieces $\O_{x_3=p_i}(-p_i,-p_i)$.

Now (\ref{thm32}) will follow from
\begin{gather}\label{eq1}
H^1\big( pr_1^*L_1\otimes pr_2^*L_2\otimes pr_3^*L'_3\otimes
\O_{C\times C\times C}(-\Delta_{1,2}-\Delta_{1,3}-\Delta_{2,3})\big)=0,\\
\label{eq2}
H^1\big( pr_1^*L_1(-p_i) \otimes pr_2^*L_2(-p_i)\otimes\O_{C\times C}(-\Delta)\big)=0.
\end{gather}

It is well-known that (\ref{eq2}) holds, if $\deg L_1(-p_i) \ge 2g$ and 
$\deg L_2(-p_i) \ge 2g+1$; i.e., if $\deg L_1 \ge 2g+1$ and $\deg L_2 \ge 2g+2$.

Since $L'_3$ is very ample and non-special, 
$H^1L'_3(-x_1-x_2) = 0$  for all $(x_1,x_2) \in C \times C$; hence
$R^1(pr_{1,2})_*\big(pr_3^*L'_3\otimes\O_{C\times C\times C}(-\Delta_{1,3}-\Delta_{2,3})\big) = 0$. 
Denoting $M_{2,L'_3}=(pr_{1,2})_*\big(pr_3^*L'_3 \otimes 
\O_{C\times C\times C}(-\Delta_{1,3}-\Delta_{2,3})\big)$,
Leray's spectral sequence reduces (\ref{eq1}) to
\begin{equation}\label{eq3}
H^1 \big(M_{2,L'_3} \otimes pr_1^*L_1 \otimes pr_2^*L_2\otimes
\O_{C\times C}(-\Delta)\big) = 0.
\end{equation}

In the next step, apply $(pr_{1,2})_*$ to
\begin{multline*}
0 \lto pr_3^*L'_3\otimes\O_{C\times C\times C}(-\Delta_{1,3}-\Delta_{2,3}) \lto
pr_3^*L'_3\otimes\O_{C\times C\times C}(-\Delta_{1,3}) \\
\lto pr_3^*L'_3 \otimes\O_{\Delta_{2,3}}(-\Delta_{1,3}) \lto 0
\end{multline*}
to obtain a sequence 
\begin{equation*}
0 \lto M_{2,L'_3} \lto pr_1^*M_{L'_3} \lto pr_2^*L'_3
\otimes\O_{C\times C}(-\Delta) \lto 0.
\end{equation*}
Its second exterior power is
\begin{equation*}
0 \lto \wedge^2 M_{2,L'_3} \lto \wedge^2 pr_1^*M_{L'_3} \lto M_{2,L'_3}
\otimes pr_2^*L'_3 \otimes\O_{C\times C}(-\Delta) \lto 0.
\end{equation*}
Since $M_{2,L'_3}$ is a vector bundle of rank $H^0L'_3(-x-y)=2$, 
$\wedge^2 M_{2,L'_3}$ is a line bundle, we calculate
\begin{equation*}
\wedge^2 M_{2,L'_3}=pr_1^*{L'_3}^{-1} \otimes pr_2^*{L'_3}^{-1}
\otimes \O_{C\times C}(\Delta),
\end{equation*}
and (\ref{eq3}) will now follow from
\begin{gather}\label{eq4}
H^1 \big(pr_1^* (\wedge^2 M_{L'_3} \otimes L_1)
\otimes pr_2^* (L_2\otimes {L'_3}^{-1})\big) = 0;\\
\label{eq5}
H^2 \big(pr_1^*(L_1\otimes {L'_3}^{-1})\otimes pr_2^*(L_2\otimes
{L'_3}^{-2})\otimes \O_{C\times C}(\Delta) \big)= 0.  
\end{gather}

Now (iii) and (iv) together imply (\ref{eq4}), while
(\ref{eq5}) follows from Leray's spectral sequence, since (ii) above implies that
$R^1(pr_2)_*\big(pr_1^*(L_1\otimes {L'_3}^{-1})\otimes\O_{C\times C}(\Delta)\big)$ 
is supported in a (possibly empty) set of points.
\end{proof}

\begin{remark}\label{rmk33}
For curves of genus $g\le 1$, some of the 
arguments in the above proof can be improved:  

For elliptic curves, 
any line bundle $L'_3$ of degree 3 is very ample and gives a plane embedding.
Hence the vanishing result of (\ref{thm32}) holds for line bundles $L_1$, $L_2$, $L_3$,
as soon as $\deg L_1\ge 3$, $\deg L_2\ge 4$ and $\deg L_3\ge 4$. Therefore
(\ref{thm13}) even holds for $\alpha=1$, $r=3$, 
in line with Eisenbud and Harris' result.

For rational curves, we have the vanishing for $\deg L_1\ge 1$, 
$\deg L_2\ge 1$ and $\deg L_3\ge 1$, hence (\ref{thm13})
is also valid for $\alpha=0$, $r=2$ in line with Castelnuovo's result.
\end{remark}
\smallskip

The bounds in (\ref{thm31}) and (\ref{thm32}) are sharp, as can be seen from the following

\begin{example}\label{ex34}
Let $C$ be a hyperelliptic curve of genus $g\ge 2$.
\begin{itemize}
\item[\textup{1.}] ($\deg L_1=\deg L_2=2g+1$, arbitrary $L_3$)  Embed $C$ by a 
line bundle $L$ of degree $2g+1$.  Then 
$(H^0\I_C(2))\otimes\O_C \lto N^{\vee}_C(2)$ is not surjective.
\item[\textup{2.}]  ($\deg L_1=\deg L_2=\deg L_3=2g+2$)  
Embed $C$ by a line bundle $L$ of degree $2g+2$.  
Then $H^0\I_C(2) \otimes H^0L \lto H^0 N^{\vee}_C(3)$ is not surjective.
\end{itemize}
\end{example}

\begin{proof}
1. Since $C$ is hyperelliptic, its image in $\P^{g+1}$ lies on a rational normal 
scroll $S$ of degree $g$ \cite{EisHar}. Any quadric not containing $S$ will intersect
$S$ in a divisor of degree $2g$, hence cannot contain $C$.

2. According to the exact sequence
\begin{equation*}
0\lto P_L\lto \big(H^0\I_C(2)\big)\otimes\O_C\lto N_C^\vee(2)\lto 0,
\end{equation*}
it suffices to show that $H^1(P_L\otimes L)\not=0$. As $H^1(\wedge^2M_L\otimes L)=0$, the exact sequence
\begin{equation*}
0\lto \wedge^2M_L\lto (pr_1)_*
\big(pr_2^*L\otimes pr_3^*L\otimes\O_{C\times C\times C}(-\Delta_{1,2}-\Delta_{1,3}-\Delta_{2,3})\big) \lto P_L\lto 0
\end{equation*}
on $C$ and Leray's spectral sequence show that 
\begin{equation*}
H^1(P_L\otimes L)\cong H^1\big(pr_1^*L\otimes pr_2^*L\otimes pr_3^*L\otimes
\O_{C\times C\times C}(-\Delta_{1,2}-\Delta_{1,3}-\Delta_{2,3})\big).
\end{equation*}
Filtering $L$ as in the proof of (\ref{thm32}), we find that this cohomology group
surjects onto $H^1(C\times C,pr_1^*L(-p)\otimes pr_2^*L(-p)\otimes\O_{C\times C}(-\Delta))$
which is non-trivial for hyperelliptic $C$ \cite[1.4.2]{Laz}.
\end{proof}

\section{On linear normality, and two applications}

\subsection{About the assumption of linear normality}\label{ssec41}

Recall the setup in which we are working: 

$X\subset\P^{r+1}$ is a nondegenerate reduced irreducible curve of 
degree $n$ and genus $g>\pi_\alpha(n,r)$ for some $\alpha\le r$. 
$C\subset\P^r$ is a curve of degree $\le r-1+\alpha$ containing a 
general hyperplane section $\Gamma$ of $X$.

Our results in sections 2 and 3 apply to linearly normal curves $C$
whose ideal sheaf is generated by quadrics. 

We would like to point out:
\begin{itemize}
\item[(i)] Even if $X$ is linearly normal, $C$ may not have this property;
e.g., 
let $C_0$ be a curve of genus $g\ge 1$, let $D$ be a line of degree $2g+k$,
let $S=C_0\times\P^1$, embedded into $\P^{2g+2k+1}$ by the
line bundle $pr_1^*D\otimes\O_{\P^1}(1)$ as a surface of degree $4g+2k$, 
and let $X$ be the intersection of $S$ with a hypersurface of large degree;
here $\alpha=2g+1$ and $h^1\I_C(1)=h^2\O_S=g$.
\item[(ii)] The intersection of the quadrics containing $\Gamma$
may have dimension $2$; let $k=1$ and let $C_0$ be hyperelliptic in the
previous example; the ideal sheaves of $S$ and its
hyperplane section $C$ are generated by quadrics and cubics.
\end{itemize}

\subsection{A criterion for a curve to be locally an intersection of quadrics}

Let $L$ be a very ample line bundle of degree $d\ge 2g+2-2h^1(L)-\Cliff(C)$
defining an embedding $C\subset \P(H^0(L))=\P^r$. Green and Lazarsfeld 
have shown that $C$ is scheme-theoretically cut out by quadrics unless it 
has a tri-secant line. Their proof is outlined in \cite[2.4.2]{Laz}. 

We would like to offer an alternative proof, starting with

\begin{proposition}
Let $L$ be a very ample line bundle on a curve $C$ which embeds $C$ as a quadratically 
normal curve\textup{,} $x\in C$. Then the following conditions are equivalent\textup{:}
\begin{itemize}
\item[\textup{(i)}] $C$ is scheme-theoretically cut out by quadrics at $x$.
\item[\textup{(ii)}] The line bundle $L(-x)$ is very ample \textup{(}i.e.\textup{,} $C$ has no 
trisecant line through $x$\textup{)} and embeds $C$ as a quadratically normal curve.
\end{itemize}
\end{proposition}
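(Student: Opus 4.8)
The plan is to reduce \textup{(i)} to a linear--algebra condition on the quadrics through $C$ and then match that condition with the projection of $C$ from $x$. Write $\I_2\subseteq\I_C$ for the sheaf of ideals generated by $H^0\big(\I_C(2)\big)$, so that \textup{(i)} is the assertion $(\I_2)_x=(\I_C)_x$. Since $\I_C\subseteq\mathfrak m_x$ forces $\I_C^2\subseteq\mathfrak m_x\I_C$, Nakayama's lemma reduces \textup{(i)} to surjectivity of $H^0\big(\I_C(2)\big)$ onto the Zariski cotangent space $\I_{C,x}/\mathfrak m_x\I_{C,x}$ of the curve germ --- an $(r-1)$--dimensional space which, via $\I_{C,x}\hookrightarrow\mathfrak m_x/\mathfrak m_x^2$, is the space of linear parts of the local equations of $C$ at $x$.

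Next I would project from $x$: put $L'=L(-x)$ and let $\pi_x\colon C\to\P^{r-1}=\P\big(H^0L'\big)$ be the projection, a morphism because $L$ is very ample. If $L'$ is not very ample there is a line $\Lambda$ through $x$ meeting $C$ in a subscheme of length $\ge3$ (a genuine trisecant, or the tangent line at a hyperflex); every quadric through $C$ meets $\Lambda$ in length $>2$ and hence contains $\Lambda$, so $\Lambda\subseteq V(\I_2)$, and since $C\cup\Lambda$ is strictly larger than $C$ as a germ at $x$, \textup{(i)} fails --- as does \textup{(ii)}. So I may assume $L'$ is very ample, i.e.\ $\pi_x$ is an embedding with image $C'\cong C$, and it only remains to match ``$(\I_2)_x=(\I_C)_x$'' with quadratic normality of $L'$.

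Choosing coordinates $x_0,\dots,x_r$ with $x=[1:0:\cdots:0]$ adapted to the flag $H^0(L)\supset H^0\big(L(-x)\big)\supset H^0\big(L(-2x)\big)\supset H^0\big(L(-3x)\big)$ --- which, using very ampleness of $L$ and of $L'$, can be taken so that $x_1\vert_C,x_2\vert_C$ vanish at $x$ to order exactly $1,2$ and $x_3\vert_C,\dots,x_r\vert_C$ to order $\ge3$ --- one gets $H^0\big(L(-2x)\big)=\langle x_2,\dots,x_r\rangle$, and $(C,x)$ is cut out by $r-1$ equations with exactly these linear parts. Restricting a quadric $Q\supseteq C$ to $C$ shows $Q=x_0\ell+q$ with $q\in\Sym^2H^0L'$ and $\ell\in H^0\big(L(-2x)\big)$, which yields the exact sequence
\begin{equation*}
0\lto\Ker\!\big(\Sym^2H^0L'\lto H^0{L'}^2\big)\lto H^0\big(\I_C(2)\big)\lto W\lto 0,
\end{equation*}
where $W\subseteq H^0\big(L(-2x)\big)$ is the span of the linear forms $\ell$ that occur and the left-hand term is $H^0\big(\I_{C'}(2)\big)$. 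Since $L$ is quadratically normal, $h^0\big(\I_C(2)\big)=\binom{r+2}{2}-h^0L^2$, and $h^0{L'}^2=h^0L^2-2$ by very ampleness of $L$; comparing dimensions in the sequence yields $\dim W=(r-1)-h^1\big(\I_{C'}(2)\big)$, so $W=H^0\big(L(-2x)\big)$ if and only if $L'$ is quadratically normal.

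Finally, every element of $(\I_2)_x$ has linear part in $W$ (immediate from the shape $x_0\ell+q$), so $(\I_2)_x=(\I_C)_x$ forces $W\supseteq\langle x_2,\dots,x_r\rangle$, i.e.\ $W=H^0\big(L(-2x)\big)$; conversely, if $W=H^0\big(L(-2x)\big)$ the quadrics through $C$ supply $r-1$ local equations of $C$ with linearly independent linear parts $x_2,\dots,x_r$, which generate $\I_{C,x}$ by Nakayama. Combined with the third paragraph this proves the equivalence. I expect the real work, and the main obstacle, to lie in the third paragraph: establishing the exact sequence for $H^0\big(\I_C(2)\big)$ together with the formula $\dim W=(r-1)-h^1\big(\I_{C'}(2)\big)$, which is precisely the statement that quadratic normality of $L(-x)$ is detected by the linear forms appearing in the equations of $C$. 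A subsidiary point of care is the degenerate way in which very ampleness of $L(-x)$ can fail --- a tangent line at a hyperflex rather than an honest trisecant --- which the second paragraph must still rule out, and which requires checking that $C\cup\Lambda$ genuinely enlarges $C$ as a germ at $x$.
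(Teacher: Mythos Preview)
Your argument is correct and complete. The route, however, is genuinely different from the paper's.

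The paper's proof is vector--bundle theoretic: it invokes diagram~(\ref{diag3}) from the proof of Proposition~\ref{prop22}, observes that under the quadratic normality hypothesis the snake lemma produces an exact sequence
\[
H^0\I_C(2)\otimes\O_C \lto N^{\vee}_{C/\P}(2) \lto \Coker\big(S^2M_L \to R_{L^2}\big) \lto 0,
\]
and then identifies the stalk at $x$ of the right--hand term with the cokernel of $S^2H^0L(-x)\to H^0L^2(-2x)$. Thus (i) holds at $x$ iff this multiplication map is surjective, which is exactly (ii). The whole thing is a few lines once the filtration machinery is in place.

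Your proof reaches the same multiplication map, but by hand: you decompose each quadric through $C$ as $x_0\ell+q$ in adapted coordinates, build the exact sequence
\[
0\lto H^0\I_{C'}(2)\lto H^0\I_C(2)\lto W\lto 0
\]
directly, and match $W$ with the conormal space via Nakayama and a dimension count. This avoids the bundles $M_L$, $R_L$ and the diagram chase entirely, at the cost of choosing coordinates and tracking orders of vanishing. What your approach buys is self--containment and transparency about the role of the projection from $x$; what the paper's approach buys is brevity and a uniform treatment (the very--ampleness of $L(-x)$ is absorbed into the surjectivity of $S^2H^0L(-x)\to H^0L^2(-2x)$ rather than handled as a separate case). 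Your explicit treatment of the trisecant/hyperflex case in the second paragraph is actually more careful than the paper, which leaves that direction of the final equivalence implicit.
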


\begin{proof}
Consider the diagram (\ref{diag3}) in the proof of (\ref{thm32}). 
Note first that the two bottom rows of the diagram exist for an arbitrary variety and are exact.
Quadratic normality implies the surjectivity of the vertical map in the middle
(compare diagram (\ref{diag2a})), and we identified
the kernels of the vertical maps, hence the snake lemma yields an exact sequence
\begin{equation*}
H^0\I_C(2) \otimes \O_C \lto N^{\vee}_{C/\P}(2) \lto
\Coker(S^2M_L \lto R_{L^2}) \lto 0.
\end{equation*}
Since the map $S^2M_{L,x} \lto R_{L^2,x}$ can be identified with $S^2H^0L(-x) 
\lto H^0L^2(-2x)$, this implies the equivalence of the two conditions.
\end{proof}

Green and Lazarsfeld's result is now an immediate 
consequence of their earlier result that the embedding by a very ample line bundle 
of degree $d\ge 2g+1-2h^1(L)-\Cliff(C)$ is quadratically normal \cite{GL}.

\subsection{Cohomology of the square of the ideal sheaf}

Our results permit to answer a question, posed independently by A.\ Bertram 
and J.\ Wahl:

\begin{proposition}\label{prop42}
Let $C\subset\P^r$ be a curve of genus $g$\textup{,} embedded
by a complete linear system of degree $d\ge 2g+3$.
Then $H^1\I_C^2(3)=0$. 
\end{proposition}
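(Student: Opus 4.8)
The plan is to compare the ideal sheaf $\I_C^2$ with the symmetric square $S^2\I_C(2)\otimes\O_{\P^r}(1)$, exploiting the fact that the homogeneous ideal of $C$ is generated by quadrics (which holds since $d\ge 2g+3\ge 2g+2$). Concretely, the surjection $S^2 H^0\I_C(2)\otimes\O_{\P^r}(-1)\to\I_C^2(3)$ together with the surjectivity of $H^0\I_C(2)\otimes H^0\O_{\P^r}(1)\to H^0\I_C(3)$ should reduce the vanishing of $H^1\I_C^2(3)$ to a statement about the kernel, i.e.\ to a cohomology computation on $C$. The natural way to make this precise is to twist the exact sequence from Proposition~\ref{prop22} (in the form of diagram~(\ref{diag3})) by $L=\O_C(1)$: this expresses $P_L\otimes L$ as the relevant kernel sheaf, and the composite displayed in the introduction shows that $H^1(P_L\otimes L)=0$ is exactly what controls whether the quadrics generate in the relevant degree.

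First I would set up the short exact sequence on $\P^r$
\begin{equation*}
0\lto \mathscr{K}\lto S^2H^0\I_C(2)\otimes\O_{\P^r}(1)\lto \I_C^2(3)\lto 0,
\end{equation*}
restrict to $C$, and identify the restriction of the middle term and of $\I_C^2(3)$ using $\I_C^2\vert_C=S^2 N^\vee_{C/\P}$. Chasing cohomology, $H^1\I_C^2(3)$ will be sandwiched between $H^1$ of a sheaf supported on $C$ and $H^2\mathscr{K}$; the first of these is governed by $H^1\big(S^2N^\vee_{C/\P}(3)\big)$, which I would handle by the two bottom rows of diagram~(\ref{diag3}) (valid on any variety) twisted by $L$: there $S^2 N^\vee_{C/\P}(3)$ sits in a filtration with graded pieces built from $R_{L^2}\otimes L$, $M_L\otimes L^2$, and $\omega_C\otimes L^3$-type terms, all of large enough degree that their $H^1$ vanishes when $d\ge 2g+3$. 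The term $R_{L^2}=L^2\otimes N^\vee_{C/\P}$ reappears here, and its twist $R_{L^2}\otimes L$ has the shape whose $H^1$ is addressed by the same circle of ideas as Theorem~\ref{thm31}.

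Second, and this is where the real content lies, I would reduce the vanishing of $H^2\mathscr{K}$ (equivalently, the surjectivity on $H^0$ plus a one-dimensional cokernel count) to the vanishing of $H^1(P_L\otimes L)$, which by the argument used to prove Theorem~\ref{thm31} — Proposition~\ref{prop22}, Leray, and the identification of the higher direct images — is equivalent to
\begin{equation*}
H^1\big(pr_1^*L\otimes pr_2^*L\otimes pr_3^*L\otimes\O_{C\times C\times C}(-\Delta_{1,2}-\Delta_{1,3}-\Delta_{2,3})\big)=0.
\end{equation*}
Since $\deg L=d\ge 2g+3$, all three factors have degree $\ge 2g+3>2g+1,2g+2$, so this is exactly the hypothesis of Theorem~\ref{thm32} (with $L_1=L_2=L_3=L$), and the desired vanishing follows. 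The main obstacle I anticipate is bookkeeping rather than a new idea: one must be careful that the ``extra'' twist by $\O_{\P^r}(1)$ is absorbed correctly on both sides, that the identification $\I_C^2\vert_C\cong S^2N^\vee_{C/\P}$ interacts properly with diagram~(\ref{diag3}), and that the degree bound $d\ge 2g+3$ (as opposed to $2g+2$) is genuinely what is needed — indeed Example~\ref{ex34}(2) shows that for $d=2g+2$ and hyperelliptic $C$ the analogous vanishing fails, so the hypothesis cannot be relaxed.
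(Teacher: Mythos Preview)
Your endpoint is right — everything hinges on $H^1(P_L\otimes L)=0$, equivalently Theorem~\ref{thm32} with $L_1=L_2=L_3=L$ of degree $d\ge 2g+3$. But the route you propose is far more elaborate than what is needed, and several of your intermediate reductions (passing from $H^2\mathscr{K}$ on $\P^r$ to data on $C$, invoking $\I_C^2\vert_C\cong S^2N^\vee$, filtering $S^2N^\vee_{C/\P}(3)$) are left as bookkeeping but are in fact where most of the work in your outline would lie.

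The paper bypasses all of this with the conormal exact sequence
\[
0\lto \I_C^2\lto \I_C\lto N^\vee_{C/\P}\lto 0.
\]
Twist by $\O(3)$; since $d\ge 2g+1$ gives projective normality, $H^1\I_C(3)=0$, so $H^1\I_C^2(3)=0$ is equivalent to the surjectivity of
\[
H^0\I_C(3)\lto H^0N^\vee_{C/\P}(3).
\]
Now apply Theorem~\ref{thm31} with $B=L=\O_C(1)$: case~(iv) (or (i), (ii) for $g\le 1$) gives surjectivity of $H^0\I_C(2)\otimes H^0\O_{\P^r}(1)\to H^0N^\vee_{C/\P}(3)$, and this factors through $H^0\I_C(3)$. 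That is the entire argument. No symmetric squares, no auxiliary kernel sheaf on $\P^r$, no $S^2N^\vee$.

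So your proposal and the paper converge on the same vanishing input, but the paper's reduction is a two-line application of the conormal sequence, whereas yours introduces an extra layer ($\mathscr{K}$, $S^2N^\vee$) that would require nontrivial justification and buys nothing. (Minor: in your displayed sequence you wrote $\O_{\P^r}(1)$ where, to match your own earlier line, you need $\O_{\P^r}(-1)$.)
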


\begin{proof} It suffices to show that
$H^0\I_C(3) \lto H^0N^{\vee}_{C/\P}(3)$ is surjective.  But by (\ref{thm31}),
the map $H^0\I_C(2)\otimes H^0\O_{\P^r}(1) \lto H^0N^{\vee}_{C/\P}(3)$
is surjective, and it factors through $H^0\I_C(3)$. 
\end{proof}

(\ref{ex34}) shows that the vanishing fails for hyperelliptic curves embedded by
a line bundle of degree $2g+2$, hence our result is the best possible without
further assumptions on $C$.

Wahl is particularly interested in the canonical embedding \cite{Wahl}. He shows that 
the vanishing of $H^1\I_C^2(3)$ holds for the general canonical curve of genus $g\ge 3$,
but fails for curves with $\Cliff(C)\le 2$.

The picture for canonical curves is almost completed by recent work of Arbarello, Bruno 
and Sernesi \cite{ABS} who show that $H^1\I_C^2(3)$ vanishes for all curves of genus $g\ge 8$ 
and $\Cliff(C)\ge 3$.

A natural generalization of (\ref{prop42}) in the spirit of \cite{GL} 
would be the following

\begin{conjecture}
Let $C\subset\P^r$ be a curve of genus $g$\textup{,} embedded by a complete linear system
$\vert L\vert$ of degree $d$\textup{,} and suppose that $h^1L\le 1$.
If $d\ge 2g+3-2h^1(L)-\Cliff(C)$\textup{,} then $H^1\I_C^2(3)=0$.
\end{conjecture}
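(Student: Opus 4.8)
The plan is to follow the proof of (\ref{prop42}), substituting for the appeal to (\ref{thm31}) a Clifford-index refinement of (\ref{thm32}) valid for a mildly special line bundle of low degree. From the conormal sequence $0\lto\I_C^2(3)\lto\I_C(3)\lto N^\vee_{C/\P}(3)\lto 0$ one sees that $H^1\I_C^2(3)=0$ follows from $H^1\I_C(3)=0$ together with surjectivity of $H^0\I_C(3)\lto H^0 N^\vee_{C/\P}(3)$. The first holds throughout the range $d\ge 2g+3-2h^1(L)-\Cliff(C)$ by the normality theorem of \cite{GL} (or, failing that near the boundary, by a Castelnuovo--Mumford regularity argument, using $h^1(L)\le 1$); for the second it suffices, exactly as in (\ref{prop42}), that the composite $H^0\I_C(2)\otimes H^0 L\lto H^0\big(N^\vee_{C/\P}(2)\otimes L\big)$ be surjective. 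Provided $C$ is scheme-theoretically cut out by quadrics, (\ref{prop22}) turns this into the vanishing $H^1(P_L\otimes L)=0$; and since the base $C$ is a curve, Leray applied to the sequence $0\lto\wedge^2 M_L\lto (pr_1)_*\big(pr_2^*L\otimes pr_3^*L\otimes\O(-\Delta_{1,2}-\Delta_{1,3}-\Delta_{2,3})\big)\lto P_L\lto 0$ defining $P_L$ shows that $H^1(P_L\otimes L)$ is a quotient of $H^1\big(pr_1^*L\otimes pr_2^*L\otimes pr_3^*L\otimes\O_{C\times C\times C}(-\Delta_{1,2}-\Delta_{1,3}-\Delta_{2,3})\big)$, with no hypothesis on higher direct images needed. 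So the whole statement comes down to this last vanishing, now with all three factors equal to $L$, with $\deg L=d$ possibly well below $2g$, and with $h^1(L)\le 1$.

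Before that, two points need care. First, $C$ need not be cut out by quadrics when $d$ is small. For $h^1(L)=1$ this is harmless: a trisecant line through $p+q+r$ would produce an effective divisor $A\sim\omega_C\otimes L^{-1}(p+q+r)$ of degree $2g+1-d$ with $h^0(A)=h^1(L)+1=2$ and $h^1(A)\ge 2$, hence contributing to $\Cliff(C)$, which forces $d\le 2g-1-\Cliff(C)$, contrary to hypothesis. For $h^1(L)=0$ the corresponding $A$ has $h^0(A)=1$, so the Clifford index does not directly obstruct it; here I would either rule out trisecants by a dimension count on the relevant Brill--Noether loci in the range $d\ge 2g+3-\Cliff(C)$, or, if a trisecant survives, work with the image sheaf $\mathcal{Q}\subset N^\vee_{C/\P}(2)$ of $H^0\I_C(2)\otimes\O_C$ -- of finite colength, supported at the trisecant points -- and check by a local computation there that the missing sections of $H^0 N^\vee_{C/\P}(3)$ come from genuine cubics in $H^0\I_C(3)$. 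Second, the Leray reduction above is unaffected when $h^1(L)=1$ (the base is a curve), but the auxiliary vanishing $H^1(\wedge^2 M_L\otimes L)=0$, needed only if one wants to identify $H^1(P_L\otimes L)$ with the $C^3$-group on the nose, is itself a Clifford-index statement to be drawn from \cite{GL}.

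The core of the proof is the vanishing of $H^1\big(pr_1^*L\otimes pr_2^*L\otimes pr_3^*L\otimes\O(-\Delta_{1,2}-\Delta_{1,3}-\Delta_{2,3})\big)$ under $d\ge 2g+3-2h^1(L)-\Cliff(C)$, $h^1(L)\le 1$: a strengthening of (\ref{thm32}) in which the common degree may be far below $2g$ and one unit of speciality is permitted. This is the main obstacle. I would rerun the filtration argument of (\ref{thm32}) with an auxiliary line bundle $L_3'$ of the smallest usable degree -- of order $g+1-h^1(L_3')$ rather than $g+3$: use a nonzero section of $L\otimes {L_3'}^{-1}$ to filter $pr_3^*L$ by $pr_3^*L_3'$ and sheaves supported on $\Delta_{1,3}\cup\Delta_{2,3}$, reducing to vanishings of the shape $H^1\big(\wedge^2 M_{L_3'}\otimes L\otimes(L\otimes {L_3'}^{-1})\big)=0$ and $H^1\big(pr_1^*(L(-p))\otimes pr_2^*(L(-p))\otimes\O_{C\times C}(-\Delta)\big)=0$. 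The former is once more a Clifford-index vanishing, to be obtained by applying the base-point-free pencil trick to $M_{L_3'}$ in the manner of \cite{GL}; the latter is the two-factor case, which by \cite[1.4.2]{Laz} holds precisely when $\deg L(-p)$ is large enough relative to $\Cliff(C)$, and whose sharpness is witnessed by the hyperelliptic example (\ref{ex34}).

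The delicate part, which I expect to consume most of the work, is the bookkeeping: the three diagonal subtractions and the single permitted unit of speciality together erode the available degree, so $L_3'$ must be chosen special enough to carry a section of $L\otimes {L_3'}^{-1}$ yet general enough that all the Green--Lazarsfeld-type vanishings for $L_3'$ hold simultaneously on a nonempty open subset of a Picard variety -- and tracking exactly how $\Cliff(C)$ enters each of these, so as to land on the bound $2g+3-2h^1(L)-\Cliff(C)$ and not something weaker, is where the argument will stand or fall. Once the choice of $L_3'$ is pinned down, the trisecant points dealt with, and the $h^1(L)=1$ case absorbed, assembling the pieces is routine, and (\ref{ex34}) should confirm that the bound cannot be relaxed.
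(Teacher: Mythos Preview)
The statement you are attempting to prove is presented in the paper as a \emph{conjecture}, not a proved result; the author offers it only as a natural Clifford-index refinement of Proposition~\ref{prop42} in the spirit of \cite{GL}, with no accompanying argument. There is therefore no proof in the paper against which to compare your attempt.

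As for the proposal itself: it is a reasonable outline of how one might hope to proceed, but it is not a proof, and you correctly identify the load-bearing gap yourself. The central three-factor vanishing on $C\times C\times C$ with all factors equal to $L$ at degree possibly as low as $2g+3-2h^1(L)-\Cliff(C)$ is not established anywhere, and your filtration strategy does not obviously close it. Concretely: the argument of Theorem~\ref{thm32} needs $L_3'$ very ample and non-special so that $M_{2,L_3'}$ is a rank-two bundle and $\wedge^2 M_{2,L_3'}$ is the explicit line bundle used in (\ref{eq5}); pushing $\deg L_3'$ down toward $g+1$ makes the fibrewise $h^0 L_3'(-x-y)$ drop to zero generically, so the exterior-square step collapses. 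Even granting a workable $L_3'$, the two residual vanishings you name are not consequences of known results at the claimed numerics: the first involves $\wedge^2 M_{L_3'}$, for which no Clifford-index vanishing of the required strength is available, and for the second, subtracting a point from $L$ may push $\deg L(-p)$ below the Green--Lazarsfeld threshold. Your treatment of trisecants in the $h^1(L)=0$ case is likewise left as two unexecuted alternatives. In short, the scaffolding mirrors the paper's methods sensibly, but the decisive vanishings remain open --- which is presumably why the author states the result as a conjecture rather than a theorem.
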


\bigskip
\textsc{Flurstra\ss e 49, 82110 Germering, Germany}

\medskip
\textit{E-mail address}: {\tt Juergen\_Rathmann@yahoo.com}

\end{document}